\documentclass[11 pt]{article}
\usepackage{amsmath,amsfonts,amssymb,amsthm}
\usepackage{cases}
\usepackage{color}
\usepackage{xcolor}
\usepackage{colortbl}
\usepackage{graphicx}
\usepackage{hyperref}

\usepackage{cite}
\newcommand{\Z}{\mathbb{Z}}

\usepackage{todonotes}

\allowdisplaybreaks
\numberwithin{equation}{section}

\setlength{\textheight}{24cm} \setlength{\textwidth}{16cm}
\setlength{\topmargin}{-0.15cm} \setlength{\headheight}{0cm}
\setlength{\headsep}{0cm} \setlength{\oddsidemargin}{0cm}

\newtheorem{theorem}{Theorem}
\newtheorem{lemma}{Lemma}

\newtheorem{proposition}{Proposition}

\numberwithin{theorem}{section}
\numberwithin{lemma}{section}
\numberwithin{corollary}{section}
\numberwithin{proposition}{section}
\numberwithin{definition}{section}
\numberwithin{remark}{section}
\numberwithin{example}{section}
\numberwithin{equation}{section}

\date{}

\begin{document}

\title{\bf Balance and pattern distribution of sequences derived from pseudorandom subsets of $\Z_q$ }
\author{\begin{tabular}[t]{c@{\extracolsep{0em}}c}
{\large Huaning Liu$^{1}$ and Arne Winterhof$^{2}$  }  \\ \\
  {\normalsize {$^{1}$Research Center for Number Theory and Its Applications }} \\
  {\normalsize {School of Mathematics, Northwest University }} \\
   {\normalsize {Xi'an 710127, China }} \\
  {\normalsize {E-mail: hnliu@nwu.edu.cn }} \\
  {\normalsize {$^{2}$Johann Radon Institute for Computational and Applied Mathematics }} \\
  {\normalsize {Austrian Academy of Sciences }} \\
   {\normalsize {Altenbergerstr.\ 69, 4040 Linz, Austria }} \\
  {\normalsize {E-mail: arne.winterhof@oeaw.ac.at }}
  \end{tabular}}

\maketitle

\begin{center}
    In memory of Reinhard Winkler\\~
\end{center}

\begin{abstract} Let $q$ be a positive integer and $\mathcal{S}=\left\{x_0,x_1,\ldots,x_{T-1}\right\}\subseteq\mathbb{Z}_q=\{0,1,\ldots,q-1\}$
with
$$0\leq x_0<x_1<\ldots< x_{T-1}\leq q-1.$$
We derive from $\mathcal{S}$ three (finite) sequences.
\begin{enumerate}
\item For an integer $M\geq 2$ let $(s_n)$ be the $M$-ary sequence defined by
\begin{eqnarray*}
s_n\equiv x_{n+1}-x_n \bmod M, \qquad n=0,1,\ldots, T-2.
\end{eqnarray*}
\item For an integer $m\geq 2$ let $(t_n)$ be the binary sequence defined by
\begin{eqnarray*}
t_n=\left\{\begin{array}{ll}
1, & \hbox{if \ }  1\leq x_{n+1}-x_n\leq m-1, \\
0, & \hbox{otherwise},
\end{array}\right. \qquad n=0,1,\ldots, T-2.
\end{eqnarray*}
\item Let $(u_n)$ be the characteristic sequence of $\mathcal{S}$,
\begin{eqnarray*}
u_n=\left\{\begin{array}{ll}
1, & \hbox{if \ }  n\in \mathcal{S}, \\
0, & \hbox{otherwise},
\end{array}\right. \qquad n=0,1,\ldots, q-1.
\end{eqnarray*}
\end{enumerate}
We study the balance and pattern distribution of the sequences
$(s_n)$, $(t_n)$ and $(u_n)$. For sets~$\mathcal{S}$ with desirable pseudorandom properties, more precisely, sets with low correlation measures, we show the following:
\begin{enumerate}
\item  The sequence $(s_n)$ is (asymptotically) balanced and has uniform pattern distribution if $T$
is of smaller order of magnitude than $q$.

\item The sequence $(t_n)$ is balanced and has uniform pattern distribution if
$T$ is approximately $\left(1-\frac{1}{2^{1/(m-1)}}\right)q$.
\item The sequence $(u_n)$ is
balanced and has uniform pattern distribution if $T$ is
approximately $\frac{q}{2}$.
\end{enumerate}
These results are motivated by earlier results for the sets of quadratic residues and primitive roots modulo a prime.
We unify these results and derive many further
(asymptotically) balanced sequences with uniform pattern distribution
from pseudorandom subsets.  \\

{\bf Key words:} sequence; pseudorandom subset; balance; pattern distribution; correlation measure. \\

{\bf MSC 2010:} 11K45, 94A55, 11T71, 11Z05.
\end{abstract}

\section{Introduction}

The balance and pattern distribution of several sequences derived from pseudorandom subsets of the finite field $\Z_p$ of prime order $p$ such as the set of quadratic residues and the set of primitive roots modulo $p$ have already been studied in the literature, see in particular \cite{Winterhof2021,Winterhof2022}. In this paper we generalize the approach
of \cite{Winterhof2021,Winterhof2022} for the sets of quadratic residues and primitive roots modulo $p$ to any pseudorandom subset. This unifies previous results and provides many new results for free.

\subsection{Fundamental examples}

More precisely, let $p>2$ be a prime, $q_0,q_1,\ldots,q_{\frac{p-3}{2}}$ be the quadratic residues modulo
$p$ in increasing order and let $g_0,g_1,\ldots,g_{\varphi(p-1)-1}$ be the primitive roots
modulo $p$ in increasing order, where $\varphi$ is  Euler's totient function.
Define the (finite) binary sequences $(s_n')$, $(s_n'')$, $(t_n')$, $(t_n'')$,
$(u_n')$ and $(u_n'')$ by
\begin{eqnarray*}
s_n'&\equiv& q_{n+1}-q_n\bmod 2, \qquad n=0,1,\ldots,\frac{p-5}{2},\\
s_n''&\equiv& g_{n+1}-g_{n} \bmod 2, \qquad n=0,1,\ldots,\varphi(p-1)-2, \\
t_n'&=&\left\{\begin{array}{ll}
1, & \hbox{if \ } q_{n+1}-q_n=1, \\
0, & \hbox{otherwise},
\end{array}\right. \qquad n=0,1,\ldots,\frac{p-5}{2}, \\
t_n''&=&\left\{\begin{array}{ll}
1, & \hbox{if \ } g_{n+1}-g_n=1, \\
0, & \hbox{otherwise},
\end{array}\right. \qquad n=0,1,\ldots,\varphi(p-1)-2,\\
u_n'&=&\left\{\begin{array}{ll}
1, & \hbox{if $n$ is a quadratic residue modulo $p$},\\
0, & \hbox{otherwise},
\end{array}\right. \quad n=0,1,\ldots, p-1,\\
u_n''&=&\left\{\begin{array}{ll}
1, & \hbox{if $n$ is a primitive root modulo $p$},\\
0, & \hbox{otherwise},
\end{array}\right. \quad n=0,1,\ldots,p-1.
\end{eqnarray*}
A (finite) binary sequence is balanced if the numbers of sequence elements equal to $0$ and $1$ differ by at most $1$.
(Note that the term balanced is used with a different meaning in combinatorics on words, see for example \cite[Definition~10.5.4]{Allouche2003}.) A sequence of length $N$ is asymptotically balanced if this number is $o(N)$.
Here
$$
f(n)=o(g(n))\quad \mbox{if}\quad
\lim_{n\rightarrow \infty} \frac{f(n)}{g(n)}=0.
$$
A sequence $(s_n)$ of length $N$ has (asymptotically) uniform pattern distribution (of length~$\ell$)
if each pattern in $\{0,1\}^\ell$ of length $\ell$ appears $N2^{-\ell}+o(N)$ times in the vector sequence
$$(s_n,s_{n+1},\ldots,s_{n+\ell-1}),\quad n=0,1,\ldots,N-\ell-1.$$
The extension of the terms balance and uniform pattern distribution to non-binary sequences is obvious.

Since there are $(p-1)/2$ quadratic residues and $\varphi(p-1)$ primitive roots modulo $p$,
$(u_n')$ is balanced and $(u_n'')$ is asymptotically balanced if
$\varphi(p-1)= \frac{p}{2}+o(p)$, which is true for example for Fermat primes, that is, $p$ is of the form $p=2^k+1$, and safe primes, that is, $(p-1)/2$ is also a (Sophie Germain) prime.
If $\varphi(p-1)$ is close to its maximum $(p-1)/2$, then almost all quadratic non-residues are primitive roots and the sequences $(u_n')$ and $(u_n'')$ are essentially dual sequences with essentially the same uniform pattern distribution. Hence, there is no need to study $(u_n'')$ in this case.
Ding \cite{Ding1998} proved that~$(u_n')$ has a uniform pattern distribution.

The second author and Xiao \cite{Winterhof2021} showed that the sequence
$(s_n'')$ is (asymptotically) balanced and has uniform pattern distribution provided that $\varphi(p-1)=o(p)$ and proved
in \cite{Winterhof2022} that the sequence $(t_n')$ is essentially balanced and has uniform pattern distribution but the
sequence~$(s_n')$ is quite unbalanced.
With the same methods one can show that~$(t_n'')$
is balanced and has desirable pattern distribution if $\varphi(p-1)= \frac{p}{2}+o(p)$.

The desirable features of the above sequences are induced by certain pseudorandomness properties of the sets
of quadratic residues and primitive roots modulo $p$. Our goal is to construct and analyze more such sequences derived from pseudorandom subsets of the residue class ring~$\Z_q$ modulo~$q$ which we identify with the integers between $0$ and $q-1$, where $q$ may be composite. The desired pseudorandom properties are measured in terms of the correlation measures defined below.

\subsection{The general case}

Pseudorandom subsets have been studied in a series of papers (see \cite{Chen2010,DartygeMS2009,DartygeS2007,DartygeS2007_1,DartygeS2009,DartygeSS2010,LiuQ2017,LiuS2014}),
in particular by
Dartyge and S\'{a}rk\"{o}zy (partly with other
coauthors). More precisely,
let $q$ be a positive integer, $\mathcal{R}\subseteq \mathbb{Z}_q=\{0,1,\ldots, q-1\}$ and define
\begin{equation}\label{en}
f_{\mathcal{R}}(n)=\left\{\begin{array}{rl} 1-\frac{|\mathcal{R}|}{q}, & \hbox{for
\ } n\in\mathcal{R},\\
-\frac{|\mathcal{R}|}{q}, & \hbox{for \ } n\not\in\mathcal{R}.
\end{array}\right.
\end{equation}
Dartyge and S\'{a}rk\"{o}zy \cite{DartygeS2009} introduced the
{\em correlation measure $C_k({\cal R},q\}$ of order
$k$} of a subset~$\mathcal{R}$ of $\Z_q$ as
$$
C_k(\mathcal{R},q)=\mathop{\max_{1\leq M\leq q}}_{0\leq d_1<d_2<\ldots<d_k\leq q-1}
\left|\sum_{n=0}^{M-1} f_{\mathcal{R}}(n+d_1)\ldots
f_{\mathcal{R}}(n+d_k)\right|.
$$
Obviously we have the trivial bound
$$
C_k(\mathcal{R},q)\le \min\{|\mathcal{R}|,q-|\mathcal{R}|\}.
$$

The subset $\mathcal{R}$ is considered a pseudorandom subset of $\Z_q$ if
$C_k(\mathcal{R},q)$ is ``small" (for
all $k=1,\ldots,K$ for some sufficiently large $K$), that is, $o\left(\min\{|\mathcal{R}|, q-|\mathcal{R}|\}\right)$.
In particular, very small $\mathcal{R}$
are not considered pseudorandom and in some of our sequence constructions below we may restrict ourselves to sufficiently large~$\mathcal{R}$.
(Note that the expected size of a random $\mathcal{R}$ is $\frac{q}{2}$,
its variance is $\frac{q}{4}$ and
by Chebyshev's inequality the probability that a random subset ${\cal R}$ satisfies $||{\cal R}|-q/2|\ge q^{1/2}\log q$ is $o(1)$ assuming that each element of $\Z_q$ belongs to ${\cal R}$ with probability $1/2$, that is, a binomial distribution.)
However, we also provide a promising sequence construction from rather sparse subsets.
For $\mathcal{R}\subseteq \mathbb{Z}_q$ we write
\begin{eqnarray*}
C(\mathcal{R},q,s)=\max_{1\leq k\leq s}C_k(\mathcal{R},q).
\end{eqnarray*}
Dartyge and S\'ark\"ozy also introduced the correlation measure of order $k$ for subsets of $\{1,\ldots,q\}$ in \cite{DartygeS2007}, that is, without reduction modulo $q$. Some bounds in the literature refer to this slightly different correlation measure. However, both measures are of the same order of magnitude and differ only by a multiplicative constant between $1$ and $2$.

Throughout this paper we denote
\begin{eqnarray}\label{Sdef}
\mathcal{S}=\left\{x_0,x_1,\ldots,x_{T-1}\right\}\subseteq\mathbb{Z}_q \quad\hbox{with}\quad
0\leq x_0<x_1<\ldots< x_{T-1}\leq q-1.
\end{eqnarray}
For given integers $M\geq 2$ and $m\geq 2$, let $(s_n)$ be
the $M$-ary sequence defined by
\begin{eqnarray}\label{snMdef}
s_n\equiv x_{n+1}-x_n \bmod M, \qquad n=0,1,\ldots, T-2,
\end{eqnarray}
and let $(t_n)$ be the binary sequence defined by
\begin{eqnarray}\label{tndef}
t_n=\left\{\begin{array}{ll}
1, & \hbox{if \ }  1\leq x_{n+1}-x_n\leq m-1, \\
0, & \hbox{otherwise},
\end{array}\right. \qquad n=0,1,\ldots, T-2.
\end{eqnarray}
We also consider the characteristic sequence $(u_n)$ of $\mathcal{S}$,
\begin{eqnarray}\label{undef}
u_n=\left\{\begin{array}{ll}
1, & \hbox{if \ }  n\in \mathcal{S}, \\
0, & \hbox{otherwise},
\end{array}\right. \qquad n=0,1,\ldots, q-1.
\end{eqnarray}

Clearly the sequence $(u_n)$ is balanced if and only if $T=\frac{q}{2}+o(q)$.
We study the balance and pattern distribution of the sequences
$(s_n)$ and $(t_n)$, and the pattern distribution of the sequence~$(u_n)$
in Section~\ref{sec:balance} and give large families of examples in Section~\ref{sec:examples}.

We write $f(n)=O(g(n))$ or $f(n)\ll g(n)$ if $|f(n)|\le c g(n)$ for some
absolute constant $c>0$.

\section{Balance and pattern distribution }\label{sec:balance}

For an $M$-ary sequence $(e_n)$ of length $T-1$ and $u\in \{ 1,\ldots,M\}$
let $N^{(u)}(e_n)$ denote the number of $n=0,1,\ldots,T-2$ with $e_n\equiv u\bmod M$.
If
$$
N^{(u)}(e_n)=\left(\frac{1}{M}+o(1)\right)T
$$
for all $u\in \{1,\ldots,M\}$, we say that $(e_n)$ is (asymptotically) {\em balanced}.
In this section we study the balance of the sequences $(s_n)$ and $(t_n)$. We start with a preliminary result.

\subsection{A preliminary result}

\begin{lemma}\label{Sec2MC}  Let $\mathcal{R}$
be a subset of $\mathbb{Z}_q=\{0,1,\ldots, q-1\}$.
Define
$$
c(i)=\left\{\begin{array}{ll}
+1, & i\in \mathcal{R}, \\
-1, & i\in\mathbb{Z}_{q}\setminus \mathcal{R}.
\end{array}
\right.
$$
For any positive integer $s$ with $1\leq s\leq q$
and any $(\varepsilon_{0},\ldots,\varepsilon_{s-1})\in\{-1,+1\}^s$,
we put
$$
\Gamma(\varepsilon_{0},\ldots,\varepsilon_{s-1})=\left|\left\{n=0,1,\ldots,q-s: \ c(n+i)=\varepsilon_{i}, \
i=0,\ldots, s-1\right\}\right|.
$$
Let $z=z(\varepsilon_{0},\ldots,\varepsilon_{s-1})$ be the number of $i$ with $\varepsilon_i=1$,
$i=0,\ldots,s-1$.
Then we have
\begin{eqnarray*}
\Gamma(\varepsilon_{0},\ldots,\varepsilon_{s-1})=
\left(\frac{|\mathcal{R}|}{q}\right)^{z}\left(1-\frac{|\mathcal{R}|}{q}\right)^{s-z}q
+O\left(2^sC(\mathcal{R},q,s)\right),\quad q\rightarrow \infty.
\end{eqnarray*}
\end{lemma}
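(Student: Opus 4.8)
The plan is to express the indicator of the event $\{c(n+i) = \varepsilon_i,\ i=0,\dots,s-1\}$ as a product over $i$ of the indicators $\mathbf{1}[c(n+i) = \varepsilon_i]$, and then rewrite each such indicator linearly in terms of $f_{\mathcal{R}}$. Concretely, if $\varepsilon_i = +1$ then $\mathbf{1}[c(n+i)=+1] = f_{\mathcal{R}}(n+i) + \frac{|\mathcal{R}|}{q}$, while if $\varepsilon_i = -1$ then $\mathbf{1}[c(n+i)=-1] = -f_{\mathcal{R}}(n+i) + \left(1 - \frac{|\mathcal{R}|}{q}\right)$; in both cases this is $\delta_i f_{\mathcal{R}}(n+i) + \alpha_i$ where $\delta_i = \varepsilon_i \in \{\pm 1\}$ and $\alpha_i \in \{\frac{|\mathcal{R}|}{q},\ 1-\frac{|\mathcal{R}|}{q}\}$ is the "probability" of the corresponding value. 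First I would record this elementary identity.

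Next I would expand the product $\prod_{i=0}^{s-1}\left(\delta_i f_{\mathcal{R}}(n+i) + \alpha_i\right)$ into $2^s$ terms indexed by subsets $I \subseteq \{0,\dots,s-1\}$, where the term for $I$ is $\left(\prod_{i\in I}\delta_i\right)\left(\prod_{i\notin I}\alpha_i\right)\prod_{i\in I} f_{\mathcal{R}}(n+i)$. Summing over $n = 0,1,\dots,q-s$ and isolating the term $I = \emptyset$ gives the main term $\left(\prod_{i\notin\emptyset}\alpha_i\right)(q-s+1) = \left(\frac{|\mathcal{R}|}{q}\right)^z\left(1-\frac{|\mathcal{R}|}{q}\right)^{s-z}(q-s+1)$, which equals the claimed main term up to an error of $O(s)$ that is absorbed. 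For each nonempty $I$, the inner sum $\sum_n \prod_{i\in I} f_{\mathcal{R}}(n+i)$ is, by definition of $C_k(\mathcal{R},q)$ with $k = |I| \le s$ and shifts $d_j$ the elements of $I$, bounded in absolute value by $C_{|I|}(\mathcal{R},q) \le C(\mathcal{R},q,s)$; the prefactors $\left|\prod_{i\in I}\delta_i\right| \le 1$ and $\left|\prod_{i\notin I}\alpha_i\right| \le 1$. Summing over the at most $2^s$ nonempty subsets $I$ yields a total contribution bounded by $2^s C(\mathcal{R},q,s)$, which gives the stated error term. One small bookkeeping point: the summation range in $\Gamma$ is $n = 0,\dots,q-s$, whereas the correlation measure sums $n = 0,\dots,M-1$ with the arguments $n + d_j$ taken as integers in $\{0,\dots,q-1\}$ (not reduced mod $q$); I would choose $M = q - s + 1$ and note the ranges match, or alternatively absorb the $O(s)$ boundary discrepancy into the error term (which is harmless since $s \le 2^s C(\mathcal{R},q,s)$ whenever the bound is nontrivial, and the statement is vacuous otherwise).

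The main obstacle is essentially none — this is a routine "inclusion–exclusion / multilinear expansion" argument — but the one place to be careful is matching the definition of the correlation measure (its summation range, and that it is defined via integer shifts $n+d_j \in \{0,\dots,q-1\}$ rather than shifts mod $q$) to the quantity $\Gamma$, so that the $2^s$ inner sums genuinely fall under the sup defining $C_{|I|}(\mathcal{R},q)$. I would state this alignment explicitly and then the estimate follows by the triangle inequality over the $2^s$ terms of the expansion.
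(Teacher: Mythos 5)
Your proposal is correct and follows essentially the same route as the paper: write the indicator of $c(n+i)=\varepsilon_i$ as $f_{\mathcal{R}}(n+i)+\tfrac{|\mathcal{R}|}{q}$ or $1-\tfrac{|\mathcal{R}|}{q}-f_{\mathcal{R}}(n+i)$, expand the product into $2^s$ terms, take the empty subset as the main term $\bigl(\tfrac{|\mathcal{R}|}{q}\bigr)^z\bigl(1-\tfrac{|\mathcal{R}|}{q}\bigr)^{s-z}(q-s+1)$, and bound each of the remaining $2^s-1$ sums by $C(\mathcal{R},q,s)$ since the coefficients have absolute value at most $1$. The paper indexes the nonempty subsets by pairs $(H,J)$ with $H\subseteq I$ and $J$ in the complement, which is only a notational difference from your single-subset indexing, and your explicit remarks on the summation range and the $O(s)$ boundary term match the paper's (equally brief) treatment of these points.
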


\begin{proof} Since otherwise the result is trivial we may assume $\mathcal{R}\not=\emptyset$ and $s\le \log q$.
Let $I\subseteq \{0,\ldots,s-1\}$ be the set of indices of size $z$ satisfying
$$
\varepsilon_{i}=\left\{\begin{array}{cc} 1, & i\in I,\\ -1, & i\not\in I.\end{array}\right.
$$
Note that
$$
f_{\mathcal{R}}(n)+\frac{\mathcal{|R|}}{q}
=\left\{\begin{array}{ll}1, & c(n)=1,\\
0, & c(n)=-1,
\end{array}\right.
$$
where $f_{\mathcal{R}}(n)$ is defined by $(\ref{en})$.
Then
\begin{eqnarray*}
\Gamma(\varepsilon_{0},\ldots,\varepsilon_{s-1})
&=&\sum_{n=0}^{q-s}\prod_{i\in I}\left(f_{\mathcal{R}}(n+i)+\frac{|\mathcal{R}|}{q}\right)
\prod_{i\not\in I}\left(1-\frac{|\mathcal{R}|}{q}-f_{\mathcal{R}}(n+i)\right)\\
&=&\left(\frac{|\mathcal{R}|}{q}\right)^{z}\left(1-\frac{|\mathcal{R}|}{q}\right)^{s-z}(q-s+1)\\
&&+\sum_{
{H\subseteq I \atop J\subseteq
\{0,\ldots,s-1\}\setminus I}\atop H\cup J\not=\emptyset}\left(\frac{|\mathcal{R}|}{q}\right)^{z-|H|}
\left(1-\frac{|\mathcal{R}|}{q}\right)^{s-z-|J|}(-1)^{|J|}\sum_{n=0}^{q-s}\prod_{i\in H\cup J}
f_{\mathcal{R}}(n+i).
\end{eqnarray*}
Since the absolute value of the sums over $n$ can be estimated by $C_{|H\cup J|}(\mathcal{R},q)\le C(\mathcal{R},q,s)$
and the number of pairs~$(H,J)$ to be considered is $2^s-1$ we get the result.
\end{proof}

\subsection{Balance of $(s_n)$}

\begin{theorem}\label{sn1balance}
Let $M\ge 2$ and $T$ be integers with
$$\frac{\log\log\log q}{\log\log q}\leq T\leq q.$$
Let the subset $\mathcal{S}$ of size $T$ and the $M$-ary sequence $(s_n)$ be defined by $(\ref{Sdef})$
and $(\ref{snMdef})$, respectively.
For any integer $u$ with $1\leq u\leq M$ let $N^{(u)}(s_n)$ denote the number of $n=0,1,\ldots,T-2$ with $s_n\equiv u\bmod M$.
Then we have
\begin{eqnarray*}
N^{(u)}(s_n)=\left(\frac{\frac{T}{q}\Big(1-\frac{T}{q}\Big)^{u-1}}{1-\Big(1-\frac{T}{q}\Big)^M}
+o(1)
\right)T+O\left(MC(\mathcal{S},q,M\log\log q)(\log q)^{M-2}\right),\quad q\rightarrow \infty.
\end{eqnarray*}

If $T=o(q)$, then we have
$$
N^{(u)}(s_n)=\left(\frac{1}{M}+o(1)\right)T
+O\left(MC(\mathcal{S},q,M\log\log q)(\log q)^{M-2}\right),\quad q\rightarrow \infty,
$$
and the sequence is $($asymptotically$)$ balanced if
$$C(\mathcal{S},q,M\log\log q)=o\left(\frac{T}{(\log q)^{M-2}}\right).$$
\end{theorem}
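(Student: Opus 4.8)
The plan is to express $N^{(u)}(s_n)$ as a sum over admissible "gap patterns" in the characteristic sequence of $\mathcal{S}$ and then apply Lemma~\ref{Sec2MC} termwise. Concretely, $s_n\equiv u\bmod M$ means that the gap $x_{n+1}-x_n$ is congruent to $u$ modulo $M$; writing $c(i)=+1$ iff $i\in\mathcal{S}$ as in Lemma~\ref{Sec2MC}, the event "$x_n$ and $x_{n+1}$ are consecutive elements of $\mathcal{S}$ with gap exactly $g$" corresponds to a block of the form $(+1,\underbrace{-1,\ldots,-1}_{g-1},+1)$ in $(c(i))$. Hence
$$
N^{(u)}(s_n)=\sum_{\substack{g\ge 1\\ g\equiv u\bmod M}}\bigl(\#\{i:\ c(i)=c(i+g)=+1,\ c(i+j)=-1\ \text{for }1\le j\le g-1\}\bigr)+(\text{boundary terms}),
$$
where the boundary terms account for indices near $0$ and $q-1$ and are $O(q/q)=O(1)$ in the relevant ranges — I would make this precise but it is routine. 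Each inner count is exactly a $\Gamma(\varepsilon_0,\ldots,\varepsilon_{g})$ with $z=2$, so Lemma~\ref{Sec2MC} gives it as $(T/q)^2(1-T/q)^{g-1}q+O(2^{g+1}C(\mathcal{S},q,g+1))$.

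The second step is to sum the main terms. Summing $(T/q)^2(1-T/q)^{g-1}q$ over all $g\ge 1$ with $g\equiv u\bmod M$ is a geometric series in $(1-T/q)^M$: setting $\beta=1-T/q$, we get $(T/q)^2 q\,\beta^{u-1}\sum_{k\ge 0}\beta^{Mk}=T\cdot\frac{(T/q)\beta^{u-1}}{1-\beta^M}$, which is exactly the claimed leading coefficient. One must truncate the sum over $g$ at roughly $M\log\log q$: the tail $g>M\log\log q$ contributes, for the main terms, at most $q\sum_{g>M\log\log q}\beta^{g-1}\le q\beta^{M\log\log q}/(T/q)$, and since $\beta\le 1-\frac{1}{q}\cdot\frac{\log\log\log q}{\log\log q}$... actually the clean way is: the number of gaps of length $>L$ is at most $q/L$ trivially (they are disjoint), so the total contribution of all $g>L$ — both main and error — to $N^{(u)}(s_n)$ is $O(q/L)=O(q/(M\log\log q))=o(T)$ using $T\ge \frac{\log\log\log q}{\log\log q}$ only very weakly; I would use this trivial disjointness bound rather than the geometric estimate, as it avoids case analysis on the size of $T$. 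This also shows why only $C(\mathcal{S},q,M\log\log q)$ enters.

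The third step is to collect the error terms for $g\le L:=M\log\log q$. There are at most $L/M+1=O(\log\log q)$ residues... no: for each fixed $u$ there are $O(L/M)=O(\log\log q)$ values of $g\le L$ with $g\equiv u\bmod M$, and each contributes $O(2^{g+1}C(\mathcal{S},q,g+1))\le O(2^{L}C(\mathcal{S},q,L))$. That bound $2^L=2^{M\log\log q}$ is far too large, so the naive summation fails — and this is the main obstacle. The fix is to be more careful: $2^{g+1}$ with $g\le L$ summed over $g\equiv u\bmod M$ is a geometric series dominated by its last term $O(2^{L})$, which is superpolynomial; this cannot be right against the stated $(\log q)^{M-2}$. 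Resolution: one should not split by the exact gap length but group gaps by their residue class more cleverly, OR — more likely what the authors intend — bound the number of sign patterns of length $s$ that actually occur as gap-blocks. In fact among patterns $(\varepsilon_0,\ldots,\varepsilon_{s-1})$ with $z$ ones, only those with a specific structure arise, and more importantly, I would instead directly estimate $N^{(u)}(s_n)$ via a single application of an inclusion–exclusion of depth $M$: write the indicator of "next element of $\mathcal{S}$ after position $i$ is at distance $\equiv u$" in terms of at most $M$ shifted values of $f_{\mathcal{S}}$ summed against a geometric weight, truncating the geometric series at $\log\log q$ steps (each step of the geometric expansion multiplies the order of correlation by $1$, not the count by $2$). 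Carrying this out, the error becomes $O(M\cdot (\log q)^{M-2}\cdot C(\mathcal{S},q,M\log\log q))$, matching the statement — the factor $(\log q)^{M-2}$ coming from the number of ways to choose the $M-2$ "interior" gap lengths each of size at most $\log\log q$, i.e. $(\log\log q)^{M-2}\le(\log q)^{M-2}$. The last two displays of the theorem then follow immediately: when $T=o(q)$, $\beta^{u-1}\to1$ and $\frac{T/q}{1-\beta^M}\to\frac1M$ since $1-\beta^M\sim MT/q$, giving the coefficient $\frac1M+o(1)$, and the final "if" clause is just the condition that the big-$O$ error is $o(T)$.

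So the approach is: (i) reduce to counting gap-blocks; (ii) truncate gap lengths at $L=M\log\log q$ using the trivial disjointness bound for the tail; (iii) apply Lemma~\ref{Sec2MC} (equivalently, a depth-$M$ inclusion–exclusion) to the bounded-length part, taking care that the combinatorial factor is $(\log q)^{M-2}$ and not $2^L$; (iv) evaluate the resulting geometric series to get the stated coefficient; (v) specialize to $T=o(q)$. The single real difficulty, as flagged, is step (iii): getting a polylogarithmic rather than superpolynomial combinatorial loss, which forces one to expand the "distance to next element is $\equiv u \bmod M$" event as an $M$-fold (not gap-length-fold) convolution before invoking the correlation bound.
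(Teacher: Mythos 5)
Your steps (i), (ii), (iv) and (v) are sound and essentially match the paper: the paper also decomposes $N^{(u)}(s_n)$ into counts $N_{k,u}=\Gamma(1,\underbrace{-1,\dots,-1}_{Mk+u-1},1)$ of gap-blocks of length $Mk+u$, truncates at $k\le H=\lfloor\log\log q\rfloor-4$, applies Lemma~\ref{Sec2MC} with $z=2$, and sums the geometric series. (Your tail treatment differs slightly: you bound the number of gaps longer than $L$ by $q/L$ via disjointness, whereas the paper only derives a lower bound from the truncation and then upgrades it to an asymptotic equality by complementary counting using the identity \eqref{sumu}; your variant is a legitimate, arguably cleaner, alternative.)

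The problem is your step (iii), where you declare the termwise error summation to fail and substitute an unspecified ``depth-$M$ inclusion--exclusion.'' The obstacle you flag is illusory: $2^{L}=2^{M\log\log q}=(\log q)^{M\log 2}$ is a \emph{fixed power of $\log q$}, not superpolynomial, and the sum $\sum_{g\le L}2^{g+1}C(\mathcal{S},q,g+1)$ is a geometric series dominated by its last term $O\bigl(2^{M(H+1)+1}C(\mathcal{S},q,M(H+1)+1)\bigr)\ll (\log q)^{M\log 2}C(\mathcal{S},q,M\log\log q)$. This ``naive'' summation is exactly what the paper does, and it yields an admissible polylogarithmic loss. (There is a small discrepancy with the exponent: the naive bound gives $(\log q)^{M\log 2}$ rather than the stated $(\log q)^{M-2}$, and these agree only for $M\ge 7$; but the difference is a bounded power of $\log q$, irrelevant in every application where $C$ is of size $q^{1/2+o(1)}$. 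The paper itself is slightly loose here.) Your proposed replacement is not carried out, and your explanation of where $(\log q)^{M-2}$ comes from --- ``choosing $M-2$ interior gap lengths'' --- does not correspond to anything in the actual structure of the problem: for the balance statement there is a single gap per index $n$, and the $\log$-power arises solely from $2^{s}$ with $s\le M\log\log q$ in Lemma~\ref{Sec2MC}. So the argument as written has a gap precisely at the one step you identify as the crux, but it is closed by doing the straightforward computation you abandoned.
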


\begin{proof} For integers $k\ge 0$ and $u$ with $1\le u\le M$ put
$$
N_{k,u}=\Gamma(1,\underbrace{-1,\ldots,-1}_{Mk},\underbrace{-1,\ldots,-1}_{u-1}, 1).
$$
Choose
$$H=\left\lfloor \log\log q\right\rfloor-4$$
and verify that for  $T\ge \frac{\log\log\log q}{\log\log q} q$
\begin{eqnarray*}
\left(1-\frac{T}{q}\right)^{M(H+1)}&\leq&\left(1-\frac{\log\log\log q}{\log\log q}\right)
^{M\log\log q-4M}\\
&\leq&(\log\log q)^{-M}\left(1-\frac{\log\log\log q}{\log\log q}\right)
^{-4M}=o(1),
\end{eqnarray*}
where we used $1-x\le e^{-x}$, and
$$\frac{\frac{T}{q}\left(1-\frac{T}{q}\right)^{u-1}}{1-\left(1-\frac{T}{q}\right)^{M}}\le 1,\quad 1\le T\le q.
$$
By Lemma \ref{Sec2MC} with $z=2$ and $s=Mk+u+1\le M\log\log q$, $k=0,1,\ldots,H$, we have
\begin{eqnarray*}
N^{(u)}(s_n)&\geq& \sum_{k=0}^{H}N_{k,u}\\
&=&\sum_{k=0}^{H}\left(\left(\frac{T}{q}\right)^{2}\left(1-\frac{T}{q}\right)^{Mk+u-1}q
+O\left(2^{Mk+u+1}C(\mathcal{S},q,Mk+u+1)\right)\right)\\
&=&\left(\frac{\frac{T}{q}\left(1-\frac{T}{q}\right)^{u-1}}{1-\left(1-\frac{T}{q}\right)^{M}}+o(1)\right)T
+O\left(2^{MH+M+1}C(\mathcal{S},q,MH+M+1)\right)\\
&=&\left(\frac{\frac{T}{q}\left(1-\frac{T}{q}\right)^{u-1}}{1-\left(1-\frac{T}{q}\right)^{M}}+o(1)\right)T
+O\left(C(\mathcal{S},q,M\log\log q)(\log q)^{M-2}\right).
\end{eqnarray*}

Note that
\begin{equation}\label{sumu}
\sum_{u=1}^{M}\frac{\frac{T}{q}\left(1-\frac{T}{q}\right)^{u-1}}{1-\left(1-\frac{T}{q}\right)^{M}}
=1.
\end{equation}
Hence, for each $v=1,\ldots,M$ we have
\begin{eqnarray*}
N^{(v)}(s_n)&=&T-1-\sum_{u\ne v} N^{(u)}(s_n)\\
&\le& T -(1+o(1))T
+O\left(MC(\mathcal{S},q,M\log\log q)(\log q)^{M-2}\right)+\frac{\frac{T}{q}\left(1-\frac{T}{q}\right)^{v-1}}{1-\left(1-\frac{T}{q}\right)^M}T
\end{eqnarray*}
and the first result follows.

If $T=o(q)$, then we get
\begin{equation}\label{To}
\frac{\frac{T}{q}\left(1-\frac{T}{q}\right)^{u-1}}{1-\left(1-\frac{T}{q}\right)^M}
=\frac{1+\sum_{j=1}^{u-1}{u-1\choose j}\left(-\frac{T}{q}\right)^j}{M+\sum_{j=2}^M{M\choose j}\left(-\frac{T}{q}\right)^{j-1}}=\frac{1+o(1)}{M+o(1)}=\frac{1}{M}+o(1)
\end{equation}
and the second result follows.
\end{proof}

\subsection{Balance of $(t_n)$}

\begin{theorem}
Let $T$ be an integer with $1\leq T\leq q$.
Let the subset $\mathcal{S}$ and the sequence~$(t_n)$ be defined by $(\ref{Sdef})$
and $(\ref{tndef})$, respectively.
For $v\in\{0,1\}$ let $N^{(v)}(t_n)$ denote the number of $n=0,1,\ldots,T-2$
with $t_n=v$.
Then we have
$$
N^{(v)}(t_n)=\left(1-\frac{T}{q}\right)^{(m-1)(1-v)}\left(1-\left(1-\frac{T}{q}\right)^{m-1}\right)^{v}
T+O\left(2^{m}C(\mathcal{S},q,m)\right),
$$
which shows that the sequence is (asymptotically) balanced if
$$T=\left(1-\frac{1}{2^{1/(m-1)}}\right)q+o(q)\quad\mbox{and}
\quad C(\mathcal{S},q,m)=o(T).$$
\end{theorem}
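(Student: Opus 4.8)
The plan is to express $N^{(0)}(t_n)$ through the pattern‑counting quantity $\Gamma$ of Lemma~\ref{Sec2MC}, applied to the $\pm1$ sequence $c(i)=+1$ for $i\in\mathcal{S}$ and $c(i)=-1$ otherwise, then to invoke Lemma~\ref{Sec2MC} once, and finally to read off $N^{(1)}(t_n)$ from the identity $N^{(0)}(t_n)+N^{(1)}(t_n)=T-1$.

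For the first step I would translate the event $t_n=0$ into a pattern of $c$. Because $x_0<x_1<\dots<x_{T-1}$, every gap is at least $1$, so for $0\le n\le T-2$ one has $t_n=0$ iff $x_{n+1}-x_n\ge m$; and since $\mathcal{S}\cap(x_n,x_{n+1})=\emptyset$, this is equivalent to $x_n+1,\dots,x_n+m-1\notin\mathcal{S}$. Note that $t_n=0$ forces $x_n\le q-1-m$ (as $x_{n+1}\le q-1$), so all these $m-1$ positions lie in $\{0,\dots,q-1\}$ and no reduction mod $q$ is triggered. Thus the indices $n\le T-2$ with $t_n=0$ are in one‑to‑one correspondence, via $n\mapsto x_n$, with the $j\in\{0,\dots,q-m\}$ satisfying $c(j)=1$ and $c(j+1)=\dots=c(j+m-1)=-1$, the only possible exception being $j=x_{T-1}$ (present iff $x_{T-1}\le q-m$). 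Hence, in the notation of Lemma~\ref{Sec2MC} with $s=m$ and $z=1$,
$$
N^{(0)}(t_n)=\Gamma\bigl(1,\underbrace{-1,\dots,-1}_{m-1}\bigr)+O(1).
$$

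Now Lemma~\ref{Sec2MC} yields
$$
\Gamma\bigl(1,\underbrace{-1,\dots,-1}_{m-1}\bigr)=\frac{T}{q}\Bigl(1-\frac{T}{q}\Bigr)^{m-1}q+O\bigl(2^mC(\mathcal{S},q,m)\bigr)=T\Bigl(1-\frac{T}{q}\Bigr)^{m-1}+O\bigl(2^mC(\mathcal{S},q,m)\bigr).
$$
As $C(\mathcal{S},q,m)\ge C_1(\mathcal{S},q)\ge\tfrac12$ whenever $\emptyset\ne\mathcal{S}\ne\mathbb{Z}_q$ (choose $M=1$ and $d_1$ in, resp.\ not in, $\mathcal{S}$), the $O(1)$ above is absorbed, and the trivial cases $T=0$, $T=q$ are checked directly; this gives the stated formula for $v=0$. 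Subtracting from $T-1$ gives $N^{(1)}(t_n)=T\bigl(1-(1-T/q)^{m-1}\bigr)+O(2^mC(\mathcal{S},q,m))$, the stated formula for $v=1$, and the two cases combine into the single displayed expression.

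For the balance claim, $(t_n)$ is asymptotically balanced iff $N^{(v)}(t_n)=(\tfrac12+o(1))T$ for $v\in\{0,1\}$, and since the two counts sum to $T-1$ it suffices that $N^{(0)}(t_n)=(\tfrac12+o(1))T$. By the formula (with $m$ fixed, so $2^m=O(1)$) this holds as soon as $(1-T/q)^{m-1}=\tfrac12+o(1)$ and $C(\mathcal{S},q,m)=o(T)$; and $(1-T/q)^{m-1}\to\tfrac12$ whenever $T/q\to 1-2^{-1/(m-1)}$, i.e.\ $T=(1-2^{-1/(m-1)})q+o(q)$ — exactly the stated hypothesis. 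I do not anticipate a real obstacle; the only point needing care is the combinatorial translation in the second paragraph, i.e.\ checking that $t_n=0$ matches an admissible pattern of $c$ up to an $O(1)$ boundary correction at the right end of $\{0,\dots,q-1\}$. One can even avoid this $O(1)$ by instead using the exact identity $N^{(1)}(t_n)=\sum_{i=1}^{m-1}\Gamma\bigl(1,\underbrace{-1,\dots,-1}_{i-1},1\bigr)$ (here $s=i+1\le m$, $z=2$, and $n\mapsto x_n$ hits every admissible $j$ since $x_n=x_{n+1}-i\le q-1-i$), applying Lemma~\ref{Sec2MC} termwise and summing the geometric series $\sum_{i=1}^{m-1}(1-T/q)^{i-1}=(1-(1-T/q)^{m-1})/(T/q)$.
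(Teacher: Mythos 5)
Your proof is correct and follows essentially the same route as the paper: both translate the gap condition on $x_{n+1}-x_n$ into a $\pm1$-pattern count for the indicator of $\mathcal{S}$ and then apply Lemma~\ref{Sec2MC}. The only cosmetic difference is that you count $v=0$ via the single pattern $(1,-1,\ldots,-1)$ with $z=1$ (absorbing the $O(1)$ boundary term correctly), whereas the paper counts $v=1$ exactly as $\sum_{u=1}^{m-1}\Gamma(1,-1,\ldots,-1,1)$ with $z=2$ and sums the geometric series --- precisely the alternative you sketch at the end.
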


\begin{proof} By Lemma \ref{Sec2MC} we get
\begin{eqnarray*}
N^{(1)}(t_n)&=&\sum_{u=1}^{m-1}\Gamma(1,\underbrace{-1,\ldots,-1}_{u-1}, 1)\\
&=&\sum_{u=1}^{m-1}\left(\left(\frac{T}{q}\right)^2\left(1-\frac{T}{q}\right)^{u-1}
q+O\left(2^{u+1}C(\mathcal{S},q,u+1)\right)\right) \\
&=&\left(1-\left(1-\frac{T}{q}\right)^{m-1}\right)T+O\left(2^{m}C(\mathcal{S},q,m)\right)
\end{eqnarray*}
and
\begin{eqnarray*}
N^{(0)}(t_n)=T-1-N^{(1)}(t_n)=\left(1-\frac{T}{q}\right)^{m-1}T+O\left(2^{m}C(\mathcal{S},q,m)\right),
\end{eqnarray*}
which completes the proof of the first result.

The second result follows
since
$$\left(1-\frac{T}{q}\right)^{m-1}=\left(1-\left(1-\frac{T}{q}\right)^{m-1}\right)$$
is equivalent to $T=\left(1-\frac{1}{2^{1/(m-1)}}\right)q$.
\end{proof}

\subsection{Pattern distribution of $(s_n)$}

\begin{theorem}\label{sn1pattern}
Let $M\ge 2$ and $T$ be integers with
$$
\frac{\log\log\log q}{\log\log q}q\leq T\leq q.
$$
Let the subset $\mathcal{S}$ of size $T$ and the $M$-ary sequence $(s_n)$ be defined by $(\ref{Sdef})$
and $(\ref{snMdef})$, respectively.
Let $(a_0,\ldots,a_{\ell-1})\in \{1,2,\ldots,M\}^\ell$ be any pattern of fixed length $\ell\geq 1$.
Let $N^{(a_0,\ldots,a_{\ell-1})}(s_n)$ be the number of $n=0,1,\ldots, T-\ell-1$ with
$s_{n+i}=a_i$ for $i=0,\ldots, \ell-1$. Then we have
\begin{eqnarray*}
&&N^{(a_0,\ldots,a_{\ell-1})}(s_n)\\&=&\left(\frac{\Big(\frac{T}{q}\Big)^{\ell}
\Big(1-\frac{T}{q}\Big)^{a_0+\ldots+a_{\ell-1}-\ell}}
{\Big(1-\Big(1-\frac{T}{q}\Big)^{M}\Big)^{\ell}}+o(1)\right)T
+O\left(M^\ell C(\mathcal{S},q,\ell M\log\log q)(\log q)^{\ell M}\right),
\end{eqnarray*}
and the sequence of patterns of length $\ell$ is (asymptotically) balanced
if
$$T=o\left(q\right)\quad \mbox{and}\quad C(\mathcal{S},q,\ell M \log\log q)=o(T/(\log q)^{\ell M}).$$
\end{theorem}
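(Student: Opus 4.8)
The plan is to mimic the structure of the proof of Theorem~\ref{sn1balance}, replacing the single-gap event by an $\ell$-fold product of gap events. The key observation is that the event $s_{n+i}=a_i$ for $i=0,\ldots,\ell-1$ translates, in terms of the characteristic pattern $c(\cdot)$ of $\mathcal{S}$, into a block pattern of the form
$$
\underbrace{1}_{x_n}\underbrace{-1,\ldots,-1}_{}1\underbrace{-1,\ldots,-1}_{}1\cdots 1,
$$
where between the $j$-th and $(j+1)$-st ``$1$'' there sit $Mk_j+a_j-1$ entries equal to $-1$ for some nonnegative integer $k_j$ (the $k_j$ accounting for the fact that $s_{n+j}\equiv a_j \bmod M$ only fixes the gap modulo $M$). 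So I would define, for $\mathbf{k}=(k_0,\ldots,k_{\ell-1})$ with $k_j\ge 0$,
$$
N_{\mathbf{k}} = \Gamma\bigl(1,\underbrace{-1,\ldots,-1}_{Mk_0+a_0-1},1,\underbrace{-1,\ldots,-1}_{Mk_1+a_1-1},1,\ldots,1,\underbrace{-1,\ldots,-1}_{Mk_{\ell-1}+a_{\ell-1}-1},1\bigr),
$$
a pattern of length $s=\ell+1+M(k_0+\cdots+k_{\ell-1})+(a_0+\cdots+a_{\ell-1}-\ell)$ with exactly $z=\ell+1$ ones. Then $N^{(a_0,\ldots,a_{\ell-1})}(s_n)=\sum_{\mathbf{k}\ge 0} N_{\mathbf{k}}$ (a finite sum, since $\mathcal{S}$ is finite), and I would truncate at $k_j\le H$ with $H=\lfloor\log\log q\rfloor - 4$ as before; the tail where some $k_j>H$ contributes at most $\bigl(1-\tfrac{T}{q}\bigr)^{M(H+1)}$ times a geometric factor in the other coordinates, which is $o(1)$ in the stated range of $T$ exactly as in Theorem~\ref{sn1balance}.

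For each truncated tuple $\mathbf{k}$ with all $k_j\le H$ one has $s\le \ell M\log\log q$ for $q$ large, so Lemma~\ref{Sec2MC} applies and gives
$$
N_{\mathbf{k}} = \Bigl(\frac{T}{q}\Bigr)^{\ell+1}\Bigl(1-\frac{T}{q}\Bigr)^{M(k_0+\cdots+k_{\ell-1})+(a_0+\cdots+a_{\ell-1})-\ell}\,q + O\bigl(2^{s}C(\mathcal{S},q,s)\bigr).
$$
Summing the main terms over $0\le k_j\le H$ and then completing the geometric series to all $k_j\ge 0$ (the added-back tail again being $o(1)\cdot T$) factorizes as
$$
\Bigl(\frac{T}{q}\Bigr)^{\ell+1} q \prod_{j=0}^{\ell-1}\Bigl(1-\frac{T}{q}\Bigr)^{a_j-1}\sum_{k_j\ge 0}\Bigl(1-\frac{T}{q}\Bigr)^{Mk_j} = \frac{\bigl(\frac{T}{q}\bigr)^{\ell}\bigl(1-\frac{T}{q}\bigr)^{a_0+\cdots+a_{\ell-1}-\ell}}{\bigl(1-(1-\frac{T}{q})^{M}\bigr)^{\ell}}\,T,
$$
using $\frac{T}{q}\cdot q = T$ and absorbing one factor of $1-(1-\tfrac{T}{q})^M$ per coordinate. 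The error terms sum to $O\bigl((H+1)^\ell 2^{\ell M\log\log q + \ell}C(\mathcal{S},q,\ell M\log\log q)\bigr)=O\bigl(M^\ell C(\mathcal{S},q,\ell M\log\log q)(\log q)^{\ell M}\bigr)$, using $2^{\ell M\log\log q}=(\log q)^{\ell M\log 2}\le(\log q)^{\ell M}$ and $(H+1)^\ell \ll(\log\log q)^\ell$, which is absorbed. Finally, when $T=o(q)$ the prefactor tends to $M^{-\ell}$ by the same expansion as in~\eqref{To} applied coordinatewise, and since $\sum_{(a_0,\ldots,a_{\ell-1})} M^{-\ell}=1$ (cf.~\eqref{sumu}), the patterns are asymptotically equidistributed under the stated hypothesis on $C(\mathcal{S},q,\ell M\log\log q)$.

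The main obstacle is bookkeeping rather than conceptual: one must be careful that the truncation error $\bigl(1-\tfrac{T}{q}\bigr)^{M(H+1)}=o(1)$ is uniform over the remaining $\ell-1$ free coordinates, i.e. that after fixing one bad $k_j>H$ the remaining geometric sums over the other coordinates are bounded (they contribute a factor $\le \bigl(1-(1-\tfrac{T}{q})^M\bigr)^{-(\ell-1)}$, which is fine since this is the same quantity appearing in the denominator of the main term), and that the number of error terms $2^s-1$ accumulated across all $(H+1)^\ell$ tuples, each of size $s\le\ell M\log\log q$, still collapses into the single bound claimed. Also one should note the lower bound argument for $N^{(v)}(t_n)$-style completion is not needed here: unlike in Theorem~\ref{sn1balance} we may compute $N^{(a_0,\ldots,a_{\ell-1})}(s_n)$ directly as an exact finite sum $\sum_{\mathbf{k}\ge 0}N_{\mathbf{k}}$, so no symmetrization over the remaining $M^\ell-1$ patterns is required — the direct estimate already gives both the main term and the error.
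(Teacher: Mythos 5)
Your decomposition into the counts $N_{\mathbf{k}}=\Gamma(1,\underbrace{-1,\ldots,-1}_{Mk_0+a_0-1},1,\ldots,1)$, the truncation at $k_j\le H$ with $H\approx\log\log q$, the application of Lemma~\ref{Sec2MC} with $z=\ell+1$, and the geometric-series evaluation of the main term all match the paper's proof. The one genuine gap is your treatment of the tail $\sum_{\mathbf{k}:\,\exists j,\ k_j>H}N_{\mathbf{k}}$. You assert it is $o(T)$ because it ``contributes at most $(1-T/q)^{M(H+1)}$ times a geometric factor in the other coordinates,'' but that bound is obtained by substituting the main-term formula $(T/q)^{\ell+1}(1-T/q)^{M\sum k_j+\cdots}q$ for each tail term $N_{\mathbf{k}}$. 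Lemma~\ref{Sec2MC} does not license this: its error term $O(2^sC(\mathcal{S},q,s))$ is useless once $s$ exceeds roughly $\log q$ (the lemma's proof even assumes $s\le\log q$), and the tail contains tuples with $s$ as large as $q$; summing the error terms over the infinite tail also diverges. So the upper bound on $N^{(a_0,\ldots,a_{\ell-1})}(s_n)$ is not established by your argument as written. This is exactly why the paper does \emph{not} compute the sum directly: it proves only the lower bound $N^{(a_0,\ldots,a_{\ell-1})}(s_n)\ge\sum_{k_j\le H}N_{\mathbf{k}}$ and then deduces the matching upper bound from the identity $\sum_{(b_0,\ldots,b_{\ell-1})}N^{(b_0,\ldots,b_{\ell-1})}(s_n)=T-\ell$ combined with the fact (via \eqref{sumu}) that the main terms sum to $1$ --- the symmetrization step you explicitly declare unnecessary.

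The gap is fixable without symmetrization, but it needs an actual argument: if some $k_j>H$ then $x_{n+j+1}-x_{n+j}\ge M(H+1)+1$, so $x_{n+j}$ is an element of $\mathcal{S}$ followed by at least $M(H+1)$ consecutive non-elements; hence the tail is at most $\ell\cdot\Gamma(1,\underbrace{-1,\ldots,-1}_{M(H+1)})$, and a \emph{single} application of Lemma~\ref{Sec2MC} with $s=M(H+1)+1\le M\log\log q$ bounds this by $\ell\frac{T}{q}\bigl(1-\frac{T}{q}\bigr)^{M(H+1)}q+O\bigl(2^{M(H+1)+1}C(\mathcal{S},q,M\log\log q)\bigr)=o(T)+O\bigl((\log q)^{M}C(\mathcal{S},q,\ell M\log\log q)\bigr)$, which fits inside the claimed error. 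Either insert this estimate or fall back on the paper's completion argument; as submitted, the proof is incomplete precisely at the one non-routine step.
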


\begin{proof} For non-negative integers $k_0,\ldots, k_{\ell-1}$ we put
\begin{eqnarray*}
&&N'_{k_0,\ldots,k_{\ell-1};a_0,\ldots,a_{\ell-1}}\\
&&=\Gamma(1,\underbrace{-1,\ldots,-1}_{Mk_0},\underbrace{-1,\ldots,-1}_{a_0-1}, 1,
\underbrace{-1,\ldots,-1}_{Mk_1},\underbrace{-1,\ldots,-1}_{a_1-1}, \ldots,1,
\underbrace{-1,\ldots,-1}_{Mk_{\ell-1}},\underbrace{-1,\ldots,-1}_{a_{\ell-1}-1}, 1).
\end{eqnarray*}
Take
$$H=\left\lfloor\log\log q\right\rfloor-2$$
and recall from the proof of Theorem~\ref{sn1balance} that
$$\left(1-\frac{T}{q}\right)^{M(H+1)}
=o(1)\quad\mbox{for}\quad T\ge \frac{\log\log\log q}{\log\log q} q.$$
By Lemma \ref{Sec2MC}
with $s=M(k_0+k_1+\ldots+k_{\ell-1})+a_0+a_1+\ldots+a_{\ell-1}+1\le \ell M\log\log q$ and $z=\ell+1$ we get as in the proof of Theorem~\ref{sn1balance}
\begin{eqnarray*}
&&N^{(a_0,\ldots,a_{\ell-1})}(s_n)\geq \sum_{k_0,k_1,\ldots,k_{\ell-1}=0}^{H}
N'_{k_0,\ldots,k_{\ell-1};a_0,\ldots,a_{\ell-1}}\\
&&=\sum_{k_0,k_1,\ldots,k_{\ell-1}=0}^{H}
\left(\left(\frac{T}{q}\right)^{\ell+1}\left(1-\frac{T}{q}\right)^{M(k_0+\ldots+k_{\ell-1})
+a_0+\ldots+a_{\ell-1}-\ell}q
\right.\\
&&\quad+O\left(2^{M(k_0+\ldots+k_{\ell-1})+a_0+\ldots+a_{\ell-1}+1}
C(\mathcal{S},q,M(k_0+\ldots+k_{\ell-1})+a_0+\ldots+a_{\ell-1}+1)\right)\Bigg)\\
&&=\prod_{i=0}^{\ell-1} \left(\frac{\frac{T}{q}\left(1-\frac{T}{q}\right)^{a_i-1}}
{1-\left(1-\frac{T}{q}\right)^{M}}+o(1)\right)T+O\left(2^{M\ell (H+1)}C(\mathcal{S},q,M\ell (H+1)+1)\right) \\
&&=\left(\frac{\left(\frac{T}{q}\right)^{\ell}\left(1-\frac{T}{q}\right)^{a_0+\ldots+a_{\ell-1}-\ell}}
{\Big(1-\left(1-\frac{T}{q}\right)^{M}\Big)^{\ell}}+o(1)\right)T
+O\left(C(\mathcal{S},q,\ell M\log\log q))(\log q)^{\ell M}\right).
\end{eqnarray*}
From \eqref{sumu} we get
$$
\sum_{a_{0},a_1,\ldots,a_{\ell-1}=1}^{M}
\frac{\left(\frac{T}{q}\right)^{\ell}\left(1-\frac{T}{q}\right)^{a_0+\ldots+a_{\ell-1}-\ell}}
{\left(1-\left(1-\frac{T}{q}\right)^{M}\right)^{\ell}}
=\left(\sum_{u=1}^M \frac{\frac{T}{q}\left(1-\frac{T}{q}\right)^{u-1}}{1-\left(1-\frac{T}{q}\right)^M}\right)^{\ell}
=1
$$
and the first result from
$$N^{(a_0,\ldots,a_{\ell-1})}(s_n)=T-\ell-\sum_{(b_0,\ldots,b_{\ell-1})\in \{1,\ldots,M\}^\ell\setminus\{(a_0,\ldots,a_{\ell-1})\})}
N^{(b_0,\ldots,b_{\ell-1})}(s_n)$$
and the lower bound on $N^{(a_0,\ldots,a_{\ell-1})}(s_n)$.

Finally note that \eqref{To} implies
\begin{eqnarray*}
\frac{\left(\frac{T}{q}\right)^{\ell}\left(1-\frac{T}{q}\right)^{a_0+\ldots+a_{\ell-1}-\ell}}{\left(1-\left(1-\frac{T}{q}\right)^M\right)^\ell}
=\left(\frac{1}{M}+o(1)\right)^\ell
=\frac{1}{M^\ell}+o(1)
\end{eqnarray*}
provided that $T=o\left(q\right)$.
\end{proof}

\subsection{Pattern distribution of $(t_n)$}
\begin{theorem}\label{tnpattern}
 Let $T$ be an integer with
$$
\frac{\log\log\log q}{\log\log q}q\leq T\leq q.
$$
Let the subset $\mathcal{S}$ of size $T$ and the sequence $(t_n)$ be defined by $(\ref{Sdef})$
and $(\ref{tndef})$ respectively.
Let $(b_0,\ldots,b_{\ell-1})\in\{0,1\}^\ell$ be any pattern of length
$\ell\geq 1$.
Let $N^{(b_0,\ldots,b_{\ell-1})}(t_n)$ be the number of $n=0,1,\ldots, T-\ell-1$ such that
$t_{n+i}=b_i$ for $i=0,\ldots, \ell-1$. Then we have
\begin{eqnarray*}
N^{(b_0,\ldots,b_{\ell-1})}(t_n)&=&\left(\Big(1-\frac{T}{q}\Big)^{(m-1)(\ell-b_0-\ldots-b_{\ell-1})}
\Big(1-\Big(1-\frac{T}{q}\Big)^{m-1}\Big)^{b_0+\ldots+b_{\ell-1}}+o(1)\right)T\\
&&+O\left(2^{\ell(m-2)}C(\mathcal{S},q,\ell\log\log q)(\log q)^{\ell}\right)
\end{eqnarray*}
and the sequence of patterns of length $\ell$ is (asymptotically) balanced if
$$T=\left(1-\frac{1}{2^{1/(m-1})}\right)q+o(q)\quad\mbox{and}\quad C(\mathcal{S},q,\ell \log\log q)=o(T/(\log q)^{\ell}).$$
\end{theorem}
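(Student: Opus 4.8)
The plan is to mimic the proof of Theorem~\ref{sn1pattern}, replacing the role of the ``runs of $M$ non-residues'' decomposition by the simpler gap structure relevant to $(t_n)$. Recall that $t_n=1$ exactly when the gap $x_{n+1}-x_n$ lies in $\{1,\ldots,m-1\}$, equivalently when the pattern of signs $c(x_n),c(x_n+1),\ldots$ reads $+1$ followed by at most $m-2$ copies of $-1$ and then $+1$. So for a prescribed pattern $(b_0,\ldots,b_{\ell-1})$, a position $n$ with $s_{n+i}=b_i$ for all $i$ corresponds to choosing, for each $i$ with $b_i=1$, a run length $u_i\in\{1,\ldots,m-1\}$, and for each $i$ with $b_i=0$, a run length $u_i\ge m$; the resulting sign block is $\Gamma$ evaluated at a string of $\ell+1$ ones separated by blocks of $u_i-1$ minus-ones. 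Just as for $(t_n)$ in the balance theorem the geometric-type series in $u$ converges without truncation when $b_i=1$, but for $b_i=0$ we must truncate the infinite run length at some $H$, and this is where the $\log\log q$ enters.

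First I would introduce, for non-negative integers $k_0,\ldots,k_{\ell-1}$, the quantity
\[
N'_{k_0,\ldots,k_{\ell-1}}=\Gamma(1,\underbrace{-1,\ldots,-1}_{w_0},1,\underbrace{-1,\ldots,-1}_{w_1},1,\ldots,1,\underbrace{-1,\ldots,-1}_{w_{\ell-1}},1),
\]
where $w_i=k_i$ if $b_i=1$ (so $k_i$ ranges over $0,\ldots,m-2$, a finite set) and $w_i=(m-1)+k_i$ if $b_i=0$ (so $k_i$ ranges over $0,1,\ldots$, to be truncated at $H=\lfloor\log\log q\rfloor-2$). Then $N^{(b_0,\ldots,b_{\ell-1})}(t_n)\ge\sum N'_{k_0,\ldots,k_{\ell-1}}$, summing each $k_i$ over its range. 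Apply Lemma~\ref{Sec2MC} with $z=\ell+1$ and $s$ the total block length, which is at most $\ell(m-1)+\ell H+\ell+1=O(\ell\log\log q)$ (treating $m,\ell$ as fixed). The main term factors as a product over $i$: for $b_i=1$ it is $\sum_{u=1}^{m-1}(T/q)(1-T/q)^{u-1}=1-(1-T/q)^{m-1}$, and for $b_i=0$ it is $\sum_{u\ge m}(T/q)(1-T/q)^{u-1}$ truncated, which equals $(1-T/q)^{m-1}-(1-T/q)^{m-1+(H+1)}=(1-T/q)^{m-1}(1+o(1))$ using the estimate $(1-T/q)^{(H+1)}=o(1)$ for $T\ge\frac{\log\log\log q}{\log\log q}q$ recalled from Theorem~\ref{sn1balance}. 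Collecting the $2^s$-type error contributions from all $O((\log\log q)^{\ell})$ tuples gives the stated $O(2^{\ell(m-2)}C(\mathcal S,q,\ell\log\log q)(\log q)^{\ell})$ term, exactly as the errors were collected in Theorem~\ref{sn1pattern}.

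To turn the lower bound into an equality, I would use that the main coefficients sum to $1$ over all patterns: $\sum_{(b_0,\ldots,b_{\ell-1})}\prod_i(\cdots)=\big((1-(1-T/q)^{m-1})+(1-T/q)^{m-1}\big)^{\ell}=1$, so writing $N^{(b_0,\ldots,b_{\ell-1})}(t_n)=T-\ell-\sum_{\text{other patterns}}N^{(\cdots)}(t_n)$ and inserting the lower bounds for the other $2^\ell-1$ patterns forces the matching upper bound. Finally, the (asymptotic) balance claim follows because when $T=(1-2^{-1/(m-1)})q+o(q)$ we have $(1-T/q)^{m-1}=1/2+o(1)$, hence each coefficient is $2^{-\ell}+o(1)$, and the error term is $o(T)$ under $C(\mathcal S,q,\ell\log\log q)=o(T/(\log q)^{\ell})$. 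The main obstacle, as in the preceding theorems, is bookkeeping: getting the truncation threshold $H$ right so that the tail $(1-T/q)^{m-1+(H+1)}$ is genuinely $o(1)$ across the whole range $\frac{\log\log\log q}{\log\log q}q\le T\le q$ while simultaneously keeping $s=O(\ell\log\log q)$ so that the accumulated $2^s$ factors only cost a power of $\log q$; everything else is the same factorization-and-complementation argument already used for $(s_n)$.
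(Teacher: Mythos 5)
Your proposal matches the paper's proof essentially step for step: the same decomposition of each pattern position into a choice of gap length (ranging over $\{1,\ldots,m-1\}$ when $b_i=1$, and over $\{m,\ldots,H\}$ with $H\approx\log\log q$ when $b_i=0$), the same application of Lemma~\ref{Sec2MC} with $z=\ell+1$ and $s\le\ell\log\log q$, the same factorization of the main term, and the same complementation argument (coefficients summing to $1$) to upgrade the lower bound to an equality. The only differences are cosmetic: the paper takes $H=\lfloor\log\log q\rfloor-4$ and parametrizes the runs by their full lengths $x_i$ rather than by offsets $k_i$.
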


\begin{proof}
For positive integers $x_0,\ldots, x_{\ell-1}$
we put
$$
N''_{x_0,\ldots, x_{\ell-1}}
=\Gamma(1,\underbrace{-1,\ldots,-1}_{x_0-1}, 1, \ldots,1,
\underbrace{-1,\ldots,-1}_{x_{\ell-1}-1}, 1).
$$
Let $Z=b_0+\ldots+b_{\ell-1}$ be the number of $j=0,\ldots,\ell-1$ with $b_j=1$.
Without loss of generality we may assume $b_0=\ldots=b_{Z-1}=1$ and $b_Z=\ldots=b_{\ell-1}=0$.
We choose
$$H=\left\lfloor \log\log q\right\rfloor-4$$
and verify that for  $T\ge \frac{\log\log\log q}{\log\log q} q$,
\begin{eqnarray*}
\left(1-\frac{T}{q}\right)^{H}=o(1).
\end{eqnarray*}
By Lemma \ref{Sec2MC}
with $s=x_0+\ldots+x_{\ell-1}+1\le \ell H+1\le \ell \log\log q$ and $z=\ell+1$
we have
\begin{eqnarray*}
&&N^{(b_0,\ldots,b_{\ell-1})}(t_n)\geq\sum_{x_0,\ldots,x_{z-1}=1}^{m-1}
\sum_{x_z,\ldots,x_{\ell-1}=m}^{H}
N''_{x_0,\ldots, x_{\ell-1}}\\
&&=\sum_{x_0,\ldots,x_{Z-1}=1}^{m-1}\sum_{x_Z,\ldots,x_{\ell-1}=m}^{H}
\Big(\Big(\frac{T}{q}\Big)^{\ell}\Big(1-\frac{T}{q}\Big)^{
x_0+\ldots+x_{\ell-1}-\ell}T+O\Big(2^{x_0+\ldots+x_{\ell-1}}
C(\mathcal{S},q, \ell H+1)
\Big)\Big)\\
&&=\left(\left(1-\frac{T}{q}\right)^{m-1}-\left(1-\frac{T}{q}\right)^{H}\right)^{\ell-Z}
\left(1-\left(1-\frac{T}{q}\right)^{m-1}\right)^Z T\\
&&\quad+ O\left(2^{\ell(m+H+1)}C(\mathcal{S},q, \ell \log\log q)\right)\\
&&=\left(\Big(1-\frac{T}{q}\Big)^{(m-1)(\ell-Z)}
\Big(1-\Big(1-\frac{T}{q}\Big)^{m-1}\Big)^{Z}+o(1)\right)T
+O\left(2^{\ell (m-3)}C(\mathcal{S},q,\ell \log\log q)(\log q)^{\ell}\right).
\end{eqnarray*}
Note that
$$
\sum_{b_{0},b_1,\ldots,b_{\ell-1}=0}^{1}
\Big(1-\frac{T}{q}\Big)^{(m-1)(\ell-b_0-\ldots-b_{\ell-1})}
\Big(1-\Big(1-\frac{T}{q}\Big)^{m-1}\Big)^{b_0+\ldots+b_{\ell-1}}
=1.
$$
Thus we get Theorem \ref{tnpattern}.
\end{proof}

\subsection{Pattern distribution of $(u_n)$}
The following result seems to be well-known at least for some special sets such as the set of primitive roots modulo a prime.  However, for the convenience of the reader we add its short proof.

\begin{theorem}\label{unpattern} Let $\mathcal{S}\subseteq \mathbb{Z}_q$ be of size $T$
and the sequence $(u_n)$ be defined by $(\ref{undef})$.
Let $(b_0,\ldots,b_{\ell-1})\in \{0,1\}^\ell$ be a pattern of length $\ell\geq 1$.
Let $N^{(b_0,\ldots,b_{\ell-1})}(u_n)$ be the number of $n=0,1,\ldots, q-\ell$ with
$u_{n+i}=b_i$ for $i=0,\ldots, \ell-1$. Then we have
\begin{eqnarray*}
N^{(b_0,\ldots,b_{\ell-1})}(u_n)=\left(\frac{T}{q}\right)^{b_0+\ldots+b_{\ell-1}}
\left(1-\frac{T}{q}\right)^{\ell-b_0-\ldots-b_{\ell-1}}q+O\left(2^{\ell}C(\mathcal{S},q,\ell)\right)
\end{eqnarray*}
and the sequence of patterns of length $\ell$ is (asymptotically) balanced if
$$T=\frac{q}{2}+o(q)\quad\mbox{and}\quad C(\mathcal{S},q,\ell)=o(q).$$
\end{theorem}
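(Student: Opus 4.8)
The plan is to derive Theorem~\ref{unpattern} directly from Lemma~\ref{Sec2MC}, applied with $\mathcal{R}=\mathcal{S}$ and $s=\ell$. First I would record the exact correspondence between the two counting problems. Using the notation $c(i)=+1$ for $i\in\mathcal{S}$ and $c(i)=-1$ otherwise from Lemma~\ref{Sec2MC}, and setting $\varepsilon_i=2b_i-1\in\{-1,+1\}$, we have $u_{n+i}=b_i$ precisely when $c(n+i)=\varepsilon_i$. Since $(u_n)$ is only defined for indices in $\{0,\ldots,q-1\}$ and the window $(u_n,u_{n+1},\ldots,u_{n+\ell-1})$ with $n$ ranging over $0,1,\ldots,q-\ell$ involves no reduction modulo $q$, the quantity $N^{(b_0,\ldots,b_{\ell-1})}(u_n)$ is literally equal to $\Gamma(\varepsilon_0,\ldots,\varepsilon_{\ell-1})$ in the notation of Lemma~\ref{Sec2MC}. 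Moreover, the number $z=z(\varepsilon_0,\ldots,\varepsilon_{\ell-1})$ of indices $i$ with $\varepsilon_i=1$ equals $b_0+\cdots+b_{\ell-1}$.

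Next I would simply invoke Lemma~\ref{Sec2MC}. With $|\mathcal{S}|=T$ it yields at once
$$N^{(b_0,\ldots,b_{\ell-1})}(u_n)=\left(\frac{T}{q}\right)^{b_0+\cdots+b_{\ell-1}}\left(1-\frac{T}{q}\right)^{\ell-b_0-\cdots-b_{\ell-1}}q+O\left(2^{\ell}C(\mathcal{S},q,\ell)\right),$$
which is the first assertion of the theorem. As a consistency check, summing the main term over all $2^{\ell}$ patterns $(b_0,\ldots,b_{\ell-1})\in\{0,1\}^{\ell}$ gives $\left(\frac{T}{q}+\left(1-\frac{T}{q}\right)\right)^{\ell}q=q$, in agreement with the fact that these patterns partition the set of indices $n=0,1,\ldots,q-\ell$.

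For the balance statement I would specialize to $T=\frac{q}{2}+o(q)$, so that $\frac{T}{q}=\frac12+o(1)$ and $1-\frac{T}{q}=\frac12+o(1)$. Then for every fixed pattern length $\ell$,
$$\left(\frac{T}{q}\right)^{b_0+\cdots+b_{\ell-1}}\left(1-\frac{T}{q}\right)^{\ell-b_0-\cdots-b_{\ell-1}}=\left(\frac12+o(1)\right)^{\ell}=\frac{1}{2^{\ell}}+o(1),$$
hence $N^{(b_0,\ldots,b_{\ell-1})}(u_n)=\frac{q}{2^{\ell}}+o(q)+O\left(2^{\ell}C(\mathcal{S},q,\ell)\right)$; under the additional hypothesis $C(\mathcal{S},q,\ell)=o(q)$ both error terms are $o(q)$, and since $(u_n)$ has length $q$ this is exactly the defining condition for (asymptotically) uniform pattern distribution of length $\ell$. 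I do not expect any genuine obstacle: the only point requiring a moment's care is verifying that $\Gamma$ and $N^{(b_0,\ldots,b_{\ell-1})}(u_n)$ coincide with no modular wraparound, which is precisely why the range in the theorem is taken to be $n=0,1,\ldots,q-\ell$ rather than all of $\Z_q$; everything else is a direct substitution into Lemma~\ref{Sec2MC}.
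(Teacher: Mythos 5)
Your proposal is correct and follows exactly the paper's own argument: identify $N^{(b_0,\ldots,b_{\ell-1})}(u_n)$ with $\Gamma(\varepsilon_0,\ldots,\varepsilon_{\ell-1})$ for $\varepsilon_i=2b_i-1$ and apply Lemma~\ref{Sec2MC} with $s=\ell$ and $z=b_0+\cdots+b_{\ell-1}$. The additional consistency check and the explicit derivation of the balance statement are fine elaborations of what the paper leaves implicit.
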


\begin{proof} Put $w=b_0+\ldots+b_{\ell-1}$ and $\varepsilon_i=(-1)^{b_i+1}$ for $i=0,\ldots,\ell-1$.
By Lemma \ref{Sec2MC} we have
\begin{eqnarray*}
N^{(b_0,\ldots,b_{\ell-1})}(u_n)&=&\Gamma(\varepsilon_{0},\ldots,\varepsilon_{\ell-1})\\
&=&\left(\frac{T}{q}\right)^{w}
\left(1-\frac{T}{q}\right)^{\ell-w}q+O\left(2^{\ell}C(\mathcal{S},q,\ell)\right)
\end{eqnarray*}
which implies the result.
\end{proof}

\section{A primer on pseudorandom subsets}\label{sec:examples}

Many pseudorandom subsets have been constructed and studied using number theoretic methods.
We can derive large families of (asymptotically) balanced sequences with uniform pattern distribution
from these pseudorandom subsets using our Theorems~\ref{sn1balance} to \ref{unpattern}.


Dartyge and S\'{a}rk\"{o}zy \cite{DartygeS2007_1} constructed pseudorandom subsets
using $d$th power residues modulo~$p$ for a divisor $d$ of $p-1$.

\begin{proposition}\label{subset-powerresidues} Let $p\geq 2$ be a prime
number, $d\mid p-1$ and let $f\in \mathbb{Z}_p[x]$ be a non-constant polynomial with no multiple roots in
the algebraic closure $\overline{\mathbb{Z}}_p$ of $\mathbb{Z}_p$. Define
$$
\mathcal{R}=\left\{n:\ 0\leq n\leq p-1, \ \exists y\in \mathbb{Z}_p^*
\hbox{\ with \ } f(n)\equiv y^d \bmod  p\right\}.
$$
Then we have
\begin{equation}\label{cardWeil}\left||\mathcal{R}|-\frac{p-m}{d}\right|\le \frac{d-1}{d}(\deg(f)-1)p^{\frac{1}{2}},
\end{equation}
where $m\le \deg(f)$ is the number of zeros of $f(x)$ in $\mathbb{Z}_p$.

Moreover, suppose that at least one
of the following conditions is satisfied:

\textup{(i)} $k\leq 2$;

\textup{(ii)} $d$ is a prime divisor of $p-1$ and $(4k)^{\deg(f)}<p$;

\textup{(iii)} the polynomial $x^{p-1}+\cdots+x+1$ is irreducible in
$\mathbb{Z}_{t}[x]$ for any prime divisor $t$ of $d$ and $\max(\deg(f), k)<p$. \\
Then we have
$$
C_{k}(\mathcal{R},p)\ll (1+o(1))^k \left(1-\frac{1}{d}\right)^{k}\deg(f)kp^{\frac{1}{2}}\log p.
$$
\end{proposition}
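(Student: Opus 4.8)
The plan is to follow the classical route through multiplicative character sums and Weil's bound, in the spirit of \cite{DartygeS2007_1}. Fix a multiplicative character $\psi$ of $\mathbb{Z}_p$ (which is a field, $p$ being prime) of exact order $d$, and adopt the convention $\eta^{*}(0)=0$ for every multiplicative character $\eta$. The $d$ characters of order dividing $d$ are $\psi^{0}=\chi_{0}$ (the principal character), $\psi,\dots,\psi^{d-1}$, and for every $b\in\mathbb{Z}_p$ one has $\frac1d\sum_{a=0}^{d-1}(\psi^{a})^{*}(b)=1$ if $b$ is a nonzero $d$th power residue modulo $p$ and $0$ otherwise. Hence the characteristic function of $\mathcal{R}$ is $\mathbf{1}_{\mathcal{R}}(n)=\frac1d\sum_{a=0}^{d-1}(\psi^{a})^{*}(f(n))$ for $n=0,\dots,p-1$, and $f_{\mathcal{R}}(n)=\mathbf{1}_{\mathcal{R}}(n)-|\mathcal{R}|/p$.

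For the cardinality bound I would sum $\mathbf{1}_{\mathcal{R}}(n)$ over $n$ and isolate the term $a=0$, which contributes $\#\{n:f(n)\neq0\}=p-m$, so that $|\mathcal{R}|=\frac{p-m}{d}+\frac1d\sum_{a=1}^{d-1}\sum_{n=0}^{p-1}\psi^{a}(f(n))$. Since $f$ is non-constant and squarefree it has a simple zero, hence is not of the form $c_{0}g(x)^{e}$ with $e\geq2$, so Weil's bound for multiplicative character sums applies to each inner sum and gives $\big|\sum_{n}\psi^{a}(f(n))\big|\leq(\deg f-1)p^{1/2}$; summing the $d-1$ terms gives \eqref{cardWeil}.

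For the correlation measure, fix $1\leq M\leq p$ and $0\leq d_{1}<\dots<d_{k}\leq p-1$ and write $f_{\mathcal{R}}(n)=P(n)-\frac1d\mathbf{1}[f(n)=0]+c$, where $P(n)=\frac1d\sum_{a=1}^{d-1}(\psi^{a})^{*}(f(n))$ satisfies $|P(n)|\leq(d-1)/d$ and $c=\frac1d-|\mathcal{R}|/p$ is a constant with $|c|\ll\deg f\cdot p^{-1/2}$ by \eqref{cardWeil}. Expanding $\prod_{j=1}^{k}f_{\mathcal{R}}(n+d_{j})$ and summing over $0\leq n\leq M-1$, the term in which every factor equals $P$ is $d^{-k}\sum_{\mathbf{a}\in\{1,\dots,d-1\}^{k}}\sum_{n=0}^{M-1}\psi^{*}(h_{\mathbf{a}}(n))$, where $h_{\mathbf{a}}(x)=\prod_{j=1}^{k}f(x+d_{j})^{a_{j}}$ has at most $k\deg f$ distinct zeros. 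Provided $h_{\mathbf{a}}$ is not of the form $c_{0}g(x)^{d}$, converting this incomplete sum into a complete one by means of additive characters and applying Weil's bound for mixed character sums gives $\big|\sum_{n=0}^{M-1}\psi^{*}(h_{\mathbf{a}}(n))\big|\ll k\deg f\cdot p^{1/2}\log p$, and summing over the $(d-1)^{k}$ values of $\mathbf{a}$ produces the main contribution $(1-1/d)^{k}k\deg f\cdot p^{1/2}\log p$. Each remaining term of the expansion carries a factor $-\frac1d\mathbf{1}[f(n+d_{j})=0]$, supported on the at most $\deg f$ values of $n$ with $f(n+d_{j})=0$, or a factor $c$; using $|P|\leq(d-1)/d$ and summing over all ways of choosing the type of each factor, I would bound their total by $\ll\deg f\,(1+|c|)^{k}+M|c|\ll\deg f\,(1+|c|)^{k}+\deg f\cdot p^{1/2}$, which is absorbed into the factor $(1+o(1))^{k}$ in the statement.

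The one genuinely delicate point, and the step I expect to be the main obstacle, is verifying that $h_{\mathbf{a}}=\prod_{j=1}^{k}f(x+d_{j})^{a_{j}}$ with $1\leq a_{j}\leq d-1$ is never a perfect $d$th power -- more precisely, never of the form $c_{0}g(x)^{t}$ for a prime divisor $t$ of $d$ -- so that Weil's theorem applies; equivalently, that $h_{\mathbf{a}}$ has a zero of multiplicity not divisible by $d$, the multiplicity of a zero $\beta$ being $\sum_{j:\,f(\beta+d_{j})=0}a_{j}$. For $k\leq2$ this is elementary: a zero of $f(x+d_{1})$ that is not a zero of $f(x+d_{2})$ has multiplicity exactly $a_{1}\not\equiv0\pmod d$ in $h_{\mathbf{a}}$, and such a zero exists because the translates $f(x+d_{1})$ and $f(x+d_{2})$ are distinct squarefree polynomials of the same degree and hence have different zero sets. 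Under hypotheses (ii) ($d$ prime and $(4k)^{\deg f}<p$) and (iii) (the cyclotomic condition on the prime divisors of $d$) the same non-degeneracy follows from the analysis of the zero multiplicities of the translates $f(x+d_{j})$ performed in \cite{DartygeS2007_1}, which I would cite rather than reprove.
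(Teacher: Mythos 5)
Your proposal is correct and takes essentially the same route the paper relies on: the paper does not reprove this proposition but imports the correlation bound from \cite{DartygeS2007_1} and obtains the cardinality estimate exactly as you do (its Remark~1 states the identity $|\mathcal{R}|-\frac{p-m}{d}=\frac{1}{d}\sum_{j=1}^{d-1}\sum_{n=0}^{p-1}\chi^j(f(n))$ followed by the Weil bound). Your treatment of the correlation measure, including deferring the non-degeneracy of $h_{\mathbf{a}}$ under conditions (i)--(iii) to the cited source, is the standard Dartyge--S\'ark\"ozy argument; the only slightly loose point is the bookkeeping of the secondary terms (those carrying an indicator $\mathbf{1}[f(n+d_j)=0]$ or a factor $c$), which are indeed absorbed into the $(1+o(1))^k$ factor once one restricts to the at most $k\deg(f)$ exceptional values of $n$ before expanding.
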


Remarks.
\begin{enumerate}
\item  Note that \eqref{cardWeil} is slightly better than \cite[(1.6)]{DartygeS2007_1} and can be obtained from
$$|V_f|-\frac{p-m}{d}=\frac{1}{d}\sum_{j=1}^{d-1}\sum_{n=0}^{p-1} \chi^j(f(n))$$
and the Weil bound for complete multiplicative character sums, where $\chi$ is any character of order $d$ of $\Z_p^*$.
\item In the special case $p>2$, $d=2$ and $f(x)=x$
the set ${\cal R}$ is the set of quadratic residues modulo $p$.
Combining Theorem~\ref{sn1balance} with $M=2$, Theorem~\ref{tnpattern} with $m=2$ and Theorem~\ref{unpattern}, respectively, we recover essentially, that is, up to logarithmic and $o(1)$ terms, the following results:
\begin{enumerate} \item \cite[Theorem~2]{Winterhof2022} on the imbalance of the sequence $(s_n)$,
\item \cite[Theorem~3]{Winterhof2022} on the uniform pattern distribution of $(t_n)$,
\item \cite[Proposition~2]{Ding1998} on the uniform pattern distribution of the Legendre sequence $(u_n)$.
\end{enumerate}
The reason for the slightly weaker results is our generic approach whereas in special cases we can optimize, for example, the choice of $H$ and can deal with complete sums over $\Z_p$ instead of incomplete ones.
\item  For sufficiently large $p$, $f(x)=x$ and, say,
$$\log\log\log p\le d\le \frac{\log\log p}{\log\log\log p}$$
combining Theorem~\ref{sn1pattern} and Proposition~\ref{subset-powerresidues} we see that the sequence $(s_n)$ has (asymptotically) a uniform pattern distribution.

\end{enumerate}

Dartyge, S\'{a}rk\"{o}zy and Szalay \cite{DartygeSS2010} studied the
pseudorandomness of subsets related to primitive roots modulo $p$.

\begin{proposition}\label{subset-primitiveroots}
Let $p$ be an odd prime and
let $\mathcal{G}_p$ be the set of the primitive roots modulo $p$. Let
$s,r\in \mathbb{N}$ with $s\mid p-1$, $r\mid p-1$ and $f(x)\in \mathbb{Z}_p[x]$. Define the subset
$\mathcal{R}\subseteq \mathbb{Z}_p$ by
$$
\mathcal{R}=\left\{g^s:\ g\in \mathcal{G}_p, \exists x\in \mathbb{Z}^*_p
\hbox{\ with \ } f(g^s)=x^r\right\}.
$$
Then we have
$$|\mathcal{R}|=\frac{1}{r}\varphi\left(\frac{p-1}{s}\right)
+O\left(\deg(f)2^{\omega\left(\frac{p-1}{s}\right)}p^{\frac{1}{2}}\log p\right),$$
where $\omega(n)$ denotes the number of distinct prime factors of $n$.

Suppose that $f(x)$ is irreducible over $\mathbb{Z}_p$, $\deg(f)\geq 2$ or $\deg(f)=r=1$.
Then we have
\begin{equation*}
C_k(\mathcal{R},p)\ll (1+o(1))^k
k\deg(f)2^{k\omega\left(\frac{p-1}{s}\right)}
p^{\frac{1}{2}}\log p.
\end{equation*}
\end{proposition}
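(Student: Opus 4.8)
The plan is to write the indicator function of $\mathcal{R}$ as a short linear combination of mixed multiplicative character sums and to reduce both assertions to the Weil bound. First I would use that, since $s\mid p-1$, the set $\{g^s:g\in\mathcal{G}_p\}$ is precisely the set of elements of $\mathbb{Z}_p^{*}$ of exact order $m:=(p-1)/s$ (one inclusion is immediate; for the other, pick a suitable exponent via the Chinese Remainder Theorem). Hence $n\in\mathcal{R}$ if and only if $n$ has exact order $m$ and $f(n)$ is a nonzero $r$th power in $\mathbb{Z}_p^{*}$. Detecting exact order $m$ by M\"obius inversion over the divisors of $m$, and detecting ``$se$th power'' and ``$r$th power'' by summing the corresponding multiplicative characters, one obtains for $n\in\mathbb{Z}_p$
\[
\mathbf{1}_{\mathcal{R}}(n)=\Biggl(\sum_{e\mid m}\frac{\mu(e)}{se}\sum_{\mathrm{ord}(\psi)\mid se}\psi(n)\Biggr)\Biggl(\frac1r\sum_{\mathrm{ord}(\lambda)\mid r}\lambda\bigl(f(n)\bigr)\Biggr)=\sum_{\psi,\lambda}c_{\psi,\lambda}\,\psi(n)\lambda\bigl(f(n)\bigr),
\]
with the convention $\psi(0)=\lambda(0)=0$, where $\lambda$ runs over the $r$ characters of order dividing $r$, $\psi$ over those of order dividing $se$ for some squarefree $e\mid m$, the principal pair carries coefficient $c_{\chi_0,\lambda_0}=\frac1r\cdot\frac{\varphi(m)}{p-1}$, and --- crucially --- $\sum_{\psi,\lambda}|c_{\psi,\lambda}|\le 2^{\omega(m)}$, since $\sum_{\psi}\sum_{e:\,\mathrm{ord}(\psi)\mid se}\frac1{se}=\sum_{e\mid m,\ \mu(e)\ne0}1=2^{\omega(m)}$.

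For the cardinality I would sum the displayed identity over $n=0,\dots,p-1$. The principal pair contributes $c_{\chi_0,\lambda_0}\cdot\#\{n:\ n\ne0,\ f(n)\ne0\}=\frac{\varphi(m)}{r}+O(\deg(f))$. For every non-principal pair the complete mixed character sum $\sum_{n}\psi(n)\lambda(f(n))$ is $O(\deg(f)\,p^{1/2})$ by the Weil bound for multiplicative character sums (when $\psi$ or $\lambda$ is principal this degenerates to an easier pure character sum). Since the pairs, weighted by $|c_{\psi,\lambda}|$, total at most $2^{\omega(m)}$, the first formula follows; the $\log p$ in the statement is slack for this complete sum.

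For the correlation measure, write $f_{\mathcal{R}}(n)=\mathbf{1}_{\mathcal{R}}(n)-|\mathcal{R}|/p$. By the cardinality estimate $|\mathcal{R}|/p$ differs from $c_{\chi_0,\lambda_0}$ by $O(\deg(f)\,2^{\omega(m)}p^{-1/2}\log p)$, and $\psi_0(n)\lambda_0(f(n))$ differs from $1$ only at the $O(\deg(f))$ points with $n=0$ or $f(n)=0$, so
\[
f_{\mathcal{R}}(n)=\sum_{(\psi,\lambda)\ne(\chi_0,\lambda_0)}c_{\psi,\lambda}\,\psi(n)\lambda\bigl(f(n)\bigr)+E(n),
\]
where $E(n)$ is a constant of size $O(\deg(f)2^{\omega(m)}p^{-1/2}\log p)$ plus a correction supported on $O(\deg(f))$ points and bounded by $1$. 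Substituting into $\sum_{n=0}^{N-1}\prod_{i=1}^{k}f_{\mathcal{R}}(n+d_i)$ and expanding, the main contribution is a sum over $k$-tuples $((\psi_1,\lambda_1),\dots,(\psi_k,\lambda_k))$ of $\prod_i c_{\psi_i,\lambda_i}$ times an incomplete mixed character sum $\sum_{n=0}^{N-1}\prod_i\psi_i(n+d_i)\lambda_i(f(n+d_i))$. Completing this incomplete sum costs a factor $O(\log p)$; the resulting complete sum has the form $\sum_n\chi\bigl(\prod_i(n+d_i)^{a_i}f(n+d_i)^{b_i}\bigr)$ and, by the Weil bound, is $O\bigl(k(\deg(f)+1)\,p^{1/2}\bigr)$ provided the rational argument is not a perfect power of order $\mathrm{ord}(\chi)$. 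Granting this, summing over the $k$-tuples against the weights $|c_{\psi_i,\lambda_i}|$ and using $\bigl(\sum_{\psi,\lambda}|c_{\psi,\lambda}|\bigr)^{k}\le 2^{k\omega(m)}$ yields a contribution $\asymp 2^{k\omega(m)}k\deg(f)\,p^{1/2}\log p$; the terms involving $E(n)$ are of lower order, and the remaining slack (completing the sums, $\deg(f)+1$ versus $\deg(f)$, the exceptional points, the mismatch between $c_{\chi_0,\lambda_0}$ and $|\mathcal{R}|/p$) is absorbed into the factor $(1+o(1))^{k}$; making this bookkeeping precise is routine.

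The main obstacle is verifying that the Weil bound actually applies, that is, that for distinct shifts $d_1,\dots,d_k$ and non-principal pairs $(\psi_i,\lambda_i)$ the product $\prod_i(n+d_i)^{a_i}f(n+d_i)^{b_i}$ is never of the form $c\,h(n)^{D}$ with $D=\mathrm{ord}(\chi)$. This is exactly where the hypotheses on $f$ enter: when $f$ is irreducible over $\mathbb{Z}_p$ with $\deg(f)\ge2$, the irreducible factors $x+d_i$ and the pairwise distinct translates $f(x+d_i)$ are ``independent'' enough that, tracking the exponents $a_i$ modulo $\mathrm{ord}(\psi_i)$ and $b_i$ modulo $\mathrm{ord}(\lambda_i)$, the product cannot be a nontrivial perfect power; the case $\deg(f)=r=1$ is handled separately since then only $\lambda_0$ occurs and one deals with pure character sums in the linear polynomials $x+d_i$. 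I expect this case analysis, rather than the (essentially mechanical) completion and counting, to be the delicate part.
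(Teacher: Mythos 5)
This proposition is quoted in the paper from Dartyge--S\'ark\"ozy--Szalay \cite{DartygeSS2010} without proof, and your argument---detecting exact order $(p-1)/s$ by M\"obius inversion over power-residue indicators, detecting $r$th powers by characters of order dividing $r$, bounding the total coefficient mass by $2^{\omega((p-1)/s)}$, completing the incomplete sums at the cost of a $\log p$, and invoking the Weil bound---is precisely the approach of that source, so the proposal is essentially correct and follows the same route. The one step you defer, checking that $\prod_i(x+d_i)^{a_i}f(x+d_i)^{b_i}$ is not a constant times a perfect power, does go through as you describe (for irreducible $f$ with $\deg f\ge 2$ the translates $f(x+d_i)$ are pairwise distinct irreducibles coprime to the $x+d_j$, and for $\deg f=r=1$ only the linear factors $x+d_i$ occur); note only that some such non-degeneracy of $f$ is implicitly needed already for the cardinality estimate, where a $\lambda_0$-free pair with $f$ a perfect $r$th power would otherwise defeat the Weil bound.
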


Remarks.
\begin{enumerate}
    \item We have $2^{\omega(n)}=n^{o(1)}$, see for example \cite{hw}, and thus
    $$2^{k\omega((p-1)/s)}p^{1/2}\log p\le p^{\frac{1}{2}+o(1)}.$$
\item For $r=s=1$ and $f(x)=x$ we get the set of primitive roots modulo $p$. Combining Proposition~\ref{subset-primitiveroots} with
Theorem~\ref{sn1balance}, Theorem~\ref{sn1pattern} and Theorem~\ref{unpattern} with $M=m=2$, respectively, we recover essentially
\begin{enumerate}
    \item \cite[Theorems~1 and 2]{Winterhof2021} on the (im-)balance and pattern distribution of the sequence~$(s_n)$,
    \item \cite[Theorem 1]{Cobeli1998} on the pattern distribution of the sequence $(u_n)$.
\end{enumerate}
\end{enumerate}

Dartyge, S\'{a}rk\"{o}zy and Szalay \cite{DartygeSS2010} also studied the
pseudorandomness of subsets defined by index properties, that is, elements with polynomial values in geometric progression.

\begin{proposition}\label{subsets:index}
Let $p$ be a prime, $f(x)\in \mathbb{Z}_p[x]$ with $\deg(f)\geq 1$,
$r\in \mathbb{Z}$, $s\in \mathbb{N}$, $s<p$. Define
$\mathcal{R}\subseteq \mathbb{Z}_p$ by
$$
\mathcal{R}=\{n:\ 0\leq n\leq p-1, \ \exists h\in\{r,r+1,\cdots,r+s-1\}
\hbox{\ with \ } \textup{ind}\ f(n)\equiv h \bmod p\},
$$
where $\textup{ind}\ m$ denotes the base $g$ index of $m$ to a fixed primitive root $g$
modulo $p$. Then we have
$$|\mathcal{R}|=s+O\left(\deg(f)p^{\frac{1}{2}}\log p\right).$$
Moreover, suppose that at least one
of the following conditions holds:

\textup{(i)} $f$ is irreducible;

\textup{(ii)} if $f$ has the factorization $f=f_1^{\alpha_1}\ldots f_u^{\alpha_u}$
where $\alpha_i\in\mathbb{N}$ and $f_i$ is irreducible over $\mathbb{Z}_p$, then there
exists a $\beta$ such that exactly one or two $f_i$'s have the degree $\beta$;

\textup{(iii)} $k\leq 2$;

\textup{(iv)} $(4k)^{\deg(f)}<p$ or $(4\deg(f))^{k}<p$. \\
Then we have
$$
C_{k}(\mathcal{R},p)\ll (1+o(1))^k \deg(f)k2^{k}p^{\frac{1}{2}}(\log p)^{k+1}.
$$
\end{proposition}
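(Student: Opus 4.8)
The plan is to reconstruct the argument of Dartyge, S\'ark\"ozy and Szalay \cite{DartygeSS2010} via multiplicative and additive character sums. Fix a primitive root $g$ modulo $p$ and note that for $n$ with $f(n)\ne 0$ one has $n\in\mathcal{R}$ exactly when the base-$g$ index of $f(n)$ lies in $\{r,r+1,\dots,r+s-1\}$ modulo $p-1$; the at most $\deg(f)$ values $n$ with $f(n)=0$ contribute only $O(\deg f)$ to every quantity below and I set them aside. The whole argument rests on the detection identity, valid for $a\in\mathbb{Z}_p^{*}$,
\begin{equation*}
\mathbf{1}\bigl[\mathrm{ind}\,a\in\{r,\dots,r+s-1\}\bigr]=\frac{1}{p-1}\sum_{\chi}c_{\chi}\,\chi(a),\qquad c_{\chi}=\sum_{h=r}^{r+s-1}\chi(g)^{-h},
\end{equation*}
$\chi$ running over the multiplicative characters modulo $p$. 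The two elementary facts I would extract from this are $c_{\chi_{0}}=s$ for the principal character, and, since $\chi(g)$ is a primitive $(\mathrm{ord}\,\chi)$-th root of unity for $\chi\ne\chi_{0}$, the bounds $|c_{\chi}|\ll\mathrm{ord}(\chi)$ and $\sum_{\chi}|c_{\chi}|=\sum_{\zeta^{p-1}=1}\bigl|\sum_{h=r}^{r+s-1}\zeta^{-h}\bigr|\ll(p-1)\log p$.

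For the cardinality I would sum the detection identity over $n$ with $f(n)\ne0$:
\begin{equation*}
|\mathcal{R}|=O(\deg f)+\frac{1}{p-1}\sum_{\chi}c_{\chi}\sum_{f(n)\ne0}\chi\bigl(f(n)\bigr).
\end{equation*}
The term $\chi=\chi_{0}$ equals $\tfrac{s}{p-1}\bigl(p-m\bigr)=s+O(\deg f)$. For a nonprincipal $\chi$ the Weil bound gives $\bigl|\sum_{n}\chi(f(n))\bigr|\le(\deg f)p^{1/2}$, \emph{except} for the at most $\deg f$ characters $\chi$ for which $f$ is, up to a constant, a perfect $(\mathrm{ord}\,\chi)$-th power; each such $\chi$ has $|c_{\chi}|\ll\deg f$, so these exceptional characters contribute only $O((\deg f)^{2})$ in total. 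Multiplying the Weil bound by $\tfrac{1}{p-1}\sum_{\chi}|c_{\chi}|\ll\log p$ then yields $|\mathcal{R}|=s+O\bigl((\deg f)p^{1/2}\log p\bigr)$.

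For the correlation of order $k$, fix $1\le M\le p$ and $0\le d_{1}<\dots<d_{k}\le p-1$. By the cardinality estimate, $\varepsilon:=\tfrac{s}{p-1}-\tfrac{|\mathcal{R}|}{p}$ satisfies $|\varepsilon|\ll(\deg f)p^{-1/2}\log p$, so for $f(n)\ne0$ I would write $f_{\mathcal{R}}(n)=A(n)+\varepsilon$ with $A(n)=\tfrac{1}{p-1}\sum_{\chi\ne\chi_{0}}c_{\chi}\chi(f(n))$ and $|A(n)|\le1$. Expanding $\prod_{i=1}^{k}f_{\mathcal{R}}(n+d_{i})$ into $2^{k}$ products of $A$'s and $\varepsilon$'s and summing over $0\le n\le M-1$, every product containing at least one $\varepsilon$ is easily bounded by $O\bigl(2^{k}(\deg f)p^{1/2}\log p\bigr)$ (together with the $O(k\deg f)$ contribution of the places where some $f(n+d_{i})=0$), while the main product $\prod_{i}A(n+d_{i})$ gives
\begin{equation*}
\frac{1}{(p-1)^{k}}\sum_{\chi_{1},\dots,\chi_{k}\ne\chi_{0}}\Bigl(\prod_{i=1}^{k}c_{\chi_{i}}\Bigr)\sum_{n=0}^{M-1}\prod_{i=1}^{k}\chi_{i}\bigl(f(n+d_{i})\bigr).
\end{equation*}
I would complete the incomplete inner sum by a nontrivial additive character $\psi$ of $\mathbb{Z}_{p}$, using $\mathbf{1}[0\le n\le M-1]=\tfrac{1}{p}\sum_{c}\bigl(\sum_{m=0}^{M-1}\psi(-cm)\bigr)\psi(cn)$, which reduces matters to the complete mixed sums
\begin{equation*}
S_{c}=\sum_{n\bmod p}\psi(cn)\prod_{i=1}^{k}\chi_{i}\bigl(f(n+d_{i})\bigr)=\sum_{n\bmod p}\psi(cn)\,\chi\Bigl(\prod_{i=1}^{k}f(n+d_{i})^{b_{i}}\Bigr),
\end{equation*}
where $\chi$ is a generator of the character group and $\chi_{i}=\chi^{b_{i}}$. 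By the Weil bound for mixed character sums, $|S_{c}|\ll(1+o(1))^{k}k\deg(f)p^{1/2}$ whenever the pair is \emph{nondegenerate}, i.e. whenever $c\ne0$, or $c=0$ and $\prod_{i}f(X+d_{i})^{b_{i}}$ is not a constant times a perfect $(\mathrm{ord}\,\chi)$-th power. Collecting $\tfrac{1}{p}\sum_{c}\bigl|\sum_{m}\psi(-cm)\bigr|\ll\log p$ and $\prod_{i}\tfrac{1}{p-1}\sum_{\chi_{i}}|c_{\chi_{i}}|\ll(\log p)^{k}$ then produces $C_{k}(\mathcal{R},p)\ll(1+o(1))^{k}k\deg(f)2^{k}p^{1/2}(\log p)^{k+1}$, conditional on no nonprincipal tuple being degenerate.

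The hard part is exactly this last point: ruling out that $\prod_{i}f(X+d_{i})^{b_{i}}$ is a constant times a perfect $(\mathrm{ord}\,\chi)$-th power for distinct $d_{1},\dots,d_{k}$ and some $(\chi_{1},\dots,\chi_{k})\ne(\chi_{0},\dots,\chi_{0})$. When $f$ is irreducible (case (i)), $\deg f<p$ forces $f(X+d_{1}),\dots,f(X+d_{k})$ to be distinct irreducible polynomials, so such a power relation would require $\mathrm{ord}(\chi)\mid b_{i}$ for all $i$, i.e.\ every $\chi_{i}=\chi_{0}$ --- a contradiction. For case (ii) I would run the same factorization analysis at the level of the irreducible factors $f_{j}$ of $f$, using that $f_{j}(X+d_{i})$ and $f_{j'}(X+d_{i'})$ can coincide only when $f_{j}$ and $f_{j'}$ have the same degree and are shifts of one another; the hypothesis that some degree $\beta$ is taken by exactly one or two of the $f_{j}$ then forbids the bad configurations. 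Cases (iii) and (iv) ($k\le2$, or $(4k)^{\deg f}<p$, or $(4\deg f)^{k}<p$) are instead handled by showing that the number of ``accidental'' coincidences among the shifted roots of $f$ is bounded by a quantity polynomial in $k$ and $\deg f$ which is $<p$ under the stated inequality, so that no degenerate nonprincipal tuple can exist. Carrying out this case distinction carefully, in the manner of \cite{DartygeSS2010}, is the genuine work here; everything else is the routine character-sum bookkeeping sketched above.
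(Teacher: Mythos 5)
This proposition is not proved in the paper at all: it is quoted verbatim from Dartyge, S\'ark\"ozy and Szalay \cite{DartygeSS2010}, so the only thing to compare your reconstruction against is that source. Your analytic skeleton is the right one and is essentially correct as written: the detection of $\mathrm{ind}\,f(n)$ in an interval by the Fourier coefficients $c_\chi$ with $c_{\chi_0}=s$, $|c_\chi|\ll\mathrm{ord}(\chi)$ and $\sum_\chi|c_\chi|\ll p\log p$; the splitting $f_{\mathcal{R}}(n)=A(n)+\varepsilon$ with $|\varepsilon|\ll\deg(f)p^{-1/2}\log p$; the completion of the incomplete sum by additive characters; and the reduction to the Weil bound for $\sum_n\psi(cn)\,\chi\bigl(\prod_i f(n+d_i)^{b_i}\bigr)$. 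The bookkeeping producing $k\deg(f)2^kp^{1/2}(\log p)^{k+1}$ is consistent, and your treatment of the exceptional characters in the cardinality estimate and of the zeros of $f$ is fine (one should remark once that the claimed bounds are trivial unless $\deg f<p^{1/2}$, which you implicitly use when asserting that the shifts $f(X+d_i)$ are pairwise distinct).

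The genuine gap is exactly where you flag it, and it cannot be waved away: the entire point of hypotheses (i)--(iv) is to guarantee that $\prod_{i}f(X+d_i)^{b_i}$ is never a constant times a perfect $e$-th power for a nonprincipal tuple, and you only prove this for (i). For (ii) your sketch is not yet an argument --- you must track the multiplicities $\alpha_j$ as well as the degrees, since a coincidence $f_j(X+d_i)=f_{j'}(X+d_{i'})$ feeds the exponent $\alpha_j b_i+\alpha_{j'}b_{i'}$ into the multiplicity count, and the ``exactly one or two factors of degree $\beta$'' hypothesis is used to isolate a root whose total multiplicity is a single $b_i$ or a sum of two terms that cannot both vanish modulo $e$. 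For (iii) and (iv) the required statement is a genuine combinatorial lemma (in the spirit of \cite{Gyarmati2004} and the lemmas of \cite{DartygeSS2010}) bounding, via the orbit structure of the roots of $f$ in $\overline{\mathbb{Z}}_p$ under the translations $x\mapsto x+d_i-d_{i'}$, the number of ways all multiplicities can become divisible by $e$; the inequalities $(4k)^{\deg f}<p$ and $(4\deg f)^k<p$ enter precisely there and do not follow from a generic count of ``accidental coincidences.'' Until those three cases are actually carried out, the correlation bound is established only under hypothesis (i).
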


Remarks.
\begin{enumerate}\item This construction extends Gyarmati's earlier construction \cite{Gyarmati2004} for $r=1$ and $s=(p-1)/2$.
\item Propositions~\ref{subset-powerresidues} and~\ref{subset-primitiveroots} provide only non-trivial constructions of size at most $(p-1)/2$. Hence, Theorem~\ref{tnpattern} can guarantee uniform pattern distribution only for $m=2$.
However, the size $s$ in the construction of Proposition ~\ref{subsets:index} is very flexible and taking
$$s=\left\lfloor \left(1-\frac{1}{2^{1/(m-1)}}\right)p\right\rfloor$$
we get a uniform pattern distribution for any $m\ge 2$.
\end{enumerate}

Dartyge, Mosaki and
S\'{a}rk\"{o}zy \cite{DartygeMS2009} presented
 the analogs for arithmetic progressions and inverses of arithmetic progressions.

\begin{proposition}
Assume that $p$ is an odd
prime number, $f(x)\in \mathbb{Z}_p[x]$ is of degree $\deg(f)\geq 2$. Let
$r\in \mathbb{Z}$, $s\in \mathbb{N}$, $s<p$. Define
$\mathcal{R}\subseteq \mathbb{Z}_p$ by
$$
\mathcal{R}=\{n:\ 0\leq n\leq p-1, \ \exists h\in\{r,r+1,\cdots,r+s-1\}
\hbox{\ with \ }f(n)\equiv h \bmod p\}.
$$
Then we have
$$|\mathcal{R}|=s+O\left(\deg(f)p^{\frac{1}{2}}\log p\right)$$
and for $k\leq \deg(f)-1$
$$
C_{k}(\mathcal{R},p)\ll \deg(f)p^{\frac{1}{2}}(\log p)^{k+1}.
$$
\end{proposition}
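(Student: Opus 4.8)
The plan is to follow the same template used in Propositions~\ref{subset-powerresidues}--\ref{subsets:index}, which all originate from the work of Dartyge and S\'ark\"ozy and their coauthors: estimate the size $|\mathcal{R}|$ by a character sum argument, and then bound the correlation measure $C_k(\mathcal{R},q)$ by expanding the characteristic function of $\mathcal{R}$ via additive or multiplicative characters and invoking the Weil bound. Concretely, since $\mathcal{R}$ is defined by the condition that $f(n)$ lies in the arithmetic progression $\{r,r+1,\ldots,r+s-1\}$ modulo $p$, the first step is to detect membership in this interval. The cleanest way is to write the indicator of the interval as a sum over additive characters of $\Z_p$: for each $n$,
$$
\mathbf{1}[f(n)\in\{r,\ldots,r+s-1\}]=\frac1p\sum_{a=0}^{p-1}\ \sum_{h=0}^{s-1} e_p\big(a(f(n)-r-h)\big),
$$
where $e_p(x)=e^{2\pi i x/p}$. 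Summing over $n$ and separating the $a=0$ term gives the main term $s$; the contribution of $a\neq 0$ is controlled by the Weil bound for the exponential sum $\sum_{n=0}^{p-1} e_p(a f(n))$, which is $O(\deg(f)\,p^{1/2})$ since $f$ has degree $\geq 2$, and the sum over $h$ of the geometric series $\sum_h e_p(-ah)$ contributes at most $p/|a|$ in absolute value (or is simply bounded by $\min\{s,\|a/p\|^{-1}\}$), whose sum over $a$ produces the logarithmic factor. This yields $|\mathcal{R}|=s+O(\deg(f)\,p^{1/2}\log p)$, as claimed.

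For the correlation bound, the second step is to substitute the definition (\ref{en}) of $f_{\mathcal{R}}$ and write $f_{\mathcal{R}}(n)=\mathbf{1}[n\in\mathcal{R}]-|\mathcal{R}|/q$. Using the multilinearity, the sum $\sum_{n=0}^{M-1} f_{\mathcal{R}}(n+d_1)\cdots f_{\mathcal{R}}(n+d_k)$ expands; one then replaces each indicator $\mathbf{1}[n+d_j\in\mathcal{R}]$ by the additive-character expression above with its own variable $a_j$ and inner sum over $h_j$. Pulling out the main term (where all $a_j=0$) cancels against the subtracted $|\mathcal{R}|/q$ factors up to a small error, exactly as in the standard Dartyge--S\'ark\"ozy computation, leaving a sum over $(a_1,\ldots,a_k)\neq(0,\ldots,0)$ of an incomplete exponential sum $\sum_{n=0}^{M-1} e_p\big(a_1 f(n+d_1)+\cdots+a_k f(n+d_k)\big)$ weighted by geometric-series factors in the $h_j$. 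The third step is to complete this incomplete sum to a full sum over $\Z_p$ at the cost of a $\log p$ factor (the standard completion technique, costing another additive character variable), and then apply the Weil bound to $\sum_{n=0}^{p-1} e_p\big(\sum_j a_j f(n+d_j)\big)$. Here one needs the polynomial $\sum_j a_j f(x+d_j)$ to be non-constant whenever $(a_1,\ldots,a_k)\neq 0$; since $\deg(f)\geq 2$, the leading term of $\sum_j a_j f(x+d_j)$ has coefficient $(\sum_j a_j)$ times the leading coefficient of $f$, and the next coefficient down involves $\sum_j a_j d_j$ — as long as the degrees $d_1<\cdots<d_k$ are distinct and $k\leq \deg(f)-1$, one can argue that $\sum_j a_j f(x+d_j)$ cannot be constant, which is precisely where the hypothesis $k\leq\deg(f)-1$ enters. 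The Weil bound then gives $O(\deg(f)\,p^{1/2})$ per character tuple, and summing the geometric-series weights over all $a_j$ and the completion variable contributes the factor $(\log p)^{k+1}$, while the number of relevant terms and the cancellation bookkeeping contribute only an absolute constant, so that $C_k(\mathcal{R},p)\ll \deg(f)\,p^{1/2}(\log p)^{k+1}$.

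I expect the main obstacle to be the degree/non-degeneracy argument in the third step: showing that the polynomial $g_{\mathbf{a}}(x):=\sum_{j=1}^k a_j f(x+d_j)$ is not identically constant modulo $p$ whenever $\mathbf{a}\neq 0$ and $0\le d_1<\cdots<d_k\le p-1$, which is exactly what is needed to apply the Weil bound non-trivially. If $\sum_j a_j\not\equiv 0$ then $g_{\mathbf{a}}$ has degree $\deg(f)\geq 2$ and we are done; the delicate case is $\sum_j a_j\equiv 0$, where one must look at lower-order coefficients. Writing $f(x+d)=\sum_{i} \binom{\cdot}{\cdot}$-type expansions, the coefficient of $x^{\deg(f)-1}$ in $g_{\mathbf{a}}$ is a nonzero multiple of $\sum_j a_j d_j$, the coefficient of $x^{\deg(f)-2}$ involves $\sum_j a_j d_j^2$, and so on; since the $d_j$ are distinct and $k\le \deg(f)-1$, a Vandermonde argument shows these linear forms in $\mathbf{a}$ cannot all vanish simultaneously with $\mathbf{a}\neq 0$, so $g_{\mathbf{a}}$ is non-constant. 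One must take a little care because $p$ may divide some of the binomial coefficients appearing in the Taylor expansion of $f(x+d)$, so the Vandermonde argument needs the $d_j$ reduced modulo $p$ and $k\le\deg(f)-1<p$ to ensure the relevant minor is a unit; this is the only genuinely content-bearing point, and it closely mirrors the degeneracy analysis already carried out in \cite{DartygeMS2009}. Everything else — the size estimate, the completion of sums, and the collection of error terms — is routine and parallels the proofs of the preceding propositions.
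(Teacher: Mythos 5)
The paper does not prove this proposition at all: it is quoted verbatim from Dartyge, Mosaki and S\'ark\"ozy \cite{DartygeMS2009}, so there is no in-paper argument to compare against. Your reconstruction follows exactly the method of that source (additive-character detection of the interval, expansion of $\prod_j f_{\mathcal{R}}(n+d_j)$, completion of the incomplete sum, Weil bound, and a Vandermonde non-degeneracy argument on the power sums $\sum_j a_j d_j^i$), and the bookkeeping of the error terms reproduces the stated bounds. One point of precision: for the completion step you need $g_{\mathbf a}(x)=\sum_j a_j f(x+d_j)$ to have degree at least $2$, not merely to be non-constant, since after completing you must bound $\sum_{n=0}^{p-1} e_p(g_{\mathbf a}(n)+bn)$ uniformly in $b$ and a degree-one $g_{\mathbf a}$ would be killed by some choice of $b$. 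Your own analysis actually delivers this: the first non-vanishing power sum occurs at some index $i_0\le k-1\le \deg(f)-2$, so $\deg g_{\mathbf a}\ge \deg(f)-i_0\ge 2$, which is precisely where the hypothesis $k\le\deg(f)-1$ is used. The caveats you flag yourself (the binomial coefficients $\binom{\deg f}{i_0}$ must be units modulo $p$, which forces the implicit standing assumption $\deg(f)<p$ also needed for Weil) are genuine but standard, and are handled the same way in \cite{DartygeMS2009}.
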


\begin{proposition}
Assume that $p$ is an odd
prime number, $r\in \mathbb{Z}$, $s\in
\mathbb{N}$, $s<p$, $f(x)\in \mathbb{Z}_p[x]$ has no multiple root
and $1\leq \deg(f(x))<p$. Define $\mathcal{R}\subseteq \mathbb{Z}_p$ by
$$
\mathcal{R}=\{n:\ 0\leq n\leq p-1, \gcd(f(n),p)=1, \exists
h\in\{r,r+1,\cdots,r+s-1\} \hbox{\ with \ } hf(n)\equiv 1 \bmod
p\}.
$$
Then we have
$$|\mathcal{R}|=s+O\left(\deg(f)p^{\frac{1}{2}}\log p\right)$$
and for $k<\frac{p}{2\deg(f)}$
$$
C_{k}(\mathcal{R},p)\ll \deg(f)p^{\frac{1}{2}}(\log p)^{k+1}.
$$
\end{proposition}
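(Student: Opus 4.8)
The plan is to rewrite membership in $\mathcal{R}$ in terms of the modular inverse of $f(n)$ and then combine the additive–character expansion of an interval with Weil's bound for exponential sums of rational functions. For $n$ with $p\nmid f(n)$, the condition ``$hf(n)\equiv 1\bmod p$ for some $h\in\{r,\dots,r+s-1\}$'' holds exactly when the inverse $\overline{f(n)}\in\mathbb{Z}_p$ of $f(n)$ lies in the interval $J=\{r,\dots,r+s-1\}$ reduced modulo $p$, and it never holds when $p\mid f(n)$, which occurs for at most $\deg(f)$ values of $n$. Writing $\mathbf{1}_J$ for the indicator of $J$ on $\mathbb{Z}_p$ and $e_p(x)=e^{2\pi i x/p}$, I would use $\mathbf{1}_J(v)=\frac1p\sum_{b\bmod p}\widehat{\mathbf{1}_J}(b)e_p(bv)$ with $\widehat{\mathbf{1}_J}(b)=\sum_{h\in J}e_p(-bh)$, so that $\widehat{\mathbf{1}_J}(0)=s$, while $|\widehat{\mathbf{1}_J}(b)|\ll p/|b|$ for $0<|b|\le p/2$ and hence $\sum_{b\not\equiv 0}|\widehat{\mathbf{1}_J}(b)|\ll p\log p$.

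For the size of $\mathcal{R}$ one has $|\mathcal{R}|=\sum_{n:\,p\nmid f(n)}\mathbf{1}_J(\overline{f(n)})$. Inserting the expansion of $\mathbf{1}_J$, the frequency $b=0$ contributes $\frac{s}{p}(p-m)=s+O(\deg(f))$, where $m\le\deg(f)$ counts the zeros of $f$ modulo $p$, and for $b\not\equiv 0$ the inner sum is the exponential sum $\sum_{n:\,p\nmid f(n)}e_p(b/f(n))$ of the rational function $b/f$; since $f$ has no multiple root, this function has only simple poles, so it is not of the shape $g^p-g+c$, and Weil's bound yields $\ll\deg(f)p^{1/2}$. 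Summing against $\sum_{b\not\equiv 0}|\widehat{\mathbf{1}_J}(b)|\ll p\log p$ and dividing by $p$ gives $|\mathcal{R}|=s+O(\deg(f)p^{1/2}\log p)$.

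For the correlation measure fix $1\le M\le p$ and $0\le d_1<\dots<d_k\le p-1$. Using the cardinality estimate one obtains $f_{\mathcal{R}}(n)=g_J(\overline{f(n)})+O(\deg(f)p^{-1/2}\log p)$ whenever $p\nmid f(n)$, where $g_J=\mathbf{1}_J-\frac{s}{p}$ has the mean-zero expansion $g_J(v)=\frac1p\sum_{b\not\equiv 0}\widehat{\mathbf{1}_J}(b)e_p(bv)$; the at most $k\deg(f)$ indices $n$ with some $f(n+d_j)\equiv 0$ contribute $O(\deg(f))$, and on expanding $\prod_{j=1}^{k}f_{\mathcal{R}}(n+d_j)$ the terms containing at least one error factor sum to $O(2^{k}\deg(f)p^{1/2}\log p)$. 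Substituting the expansion of each $g_J(\overline{f(n+d_j)})$ into the remaining main term reduces it to
$$
\frac1{p^{k}}\sum_{b_1,\dots,b_k\in\mathbb{Z}_p\setminus\{0\}}\Bigl(\prod_{j=1}^{k}\widehat{\mathbf{1}_J}(b_j)\Bigr)\sum_{n}e_p\!\Bigl(\sum_{j=1}^{k}\frac{b_j}{f(n+d_j)}\Bigr),
$$
and pulling out $\frac1{p^{k}}\prod_{j}\sum_{b_j\not\equiv 0}|\widehat{\mathbf{1}_J}(b_j)|\ll(\log p)^{k}$ leaves the incomplete sum $\sum_{0\le n<M}e_p(R(n))$ with $R(x)=\sum_{j=1}^{k}b_j/f(x+d_j)$ to be bounded uniformly in $(b_1,\dots,b_k)$. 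Completing this sum in the standard way costs one further factor $\log p$ and reduces matters to the complete sums $\sum_{y}e_p(R(y)+cy)$, $c\in\mathbb{Z}_p$, whose denominator $\prod_{j=1}^{k}f(x+d_j)$ has degree $k\deg(f)<p/2$ by hypothesis; Weil's bound then gives $\ll k\deg(f)p^{1/2}$, and assembling the pieces produces $C_k(\mathcal{R},p)\ll\deg(f)p^{1/2}(\log p)^{k+1}$, the factors $k$ and $2^{k}$ being absorbed for fixed $k$.

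The step I expect to be the main obstacle is justifying Weil's bound for the complete sum in the case $c=0$, i.e.\ verifying that $R(x)=\sum_{j=1}^{k}b_j/f(x+d_j)$ is genuinely non-degenerate --- not constant, hence (since it vanishes at infinity) not identically zero, hence, its finite poles being simple, not of the exceptional form $g^{p}-g+\mathrm{const}$ --- for every admissible choice of distinct $d_1,\dots,d_k$ and every $(b_1,\dots,b_k)\in(\mathbb{Z}_p\setminus\{0\})^{k}$. If $R\equiv0$ then, after clearing denominators, $\sum_{j}b_j\prod_{i\ne j}f(x+d_i)\equiv0$; evaluating at a zero $\alpha-d_j$ of $f(x+d_j)$ shows that for each zero $\alpha$ of $f$ there is some $i\ne j$ with $\alpha+d_i-d_j$ again a zero of $f$, and iterating over the finitely many zeros of $f$ forces a cyclic relation $\sum_{t}(d_{i_t}-d_j)\equiv0\pmod p$ with at most $\deg(f)$ summands drawn from the differences $d_i-d_j$. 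This is impossible when $k\le2$ (a single nonzero difference, added to itself at most $\deg(f)<p$ times, cannot vanish modulo $p$), and in general one must exclude it --- or show that the offending tuples $(b_1,\dots,b_k)$ contribute negligibly --- using the squarefreeness of $f$ together with $k\deg(f)<p/2$, the latter being exactly what also keeps the numerator and denominator degrees of $R(x)+cx$ below $p$ as required by Weil's bound. The other ingredients (the interval Fourier-coefficient estimate, the completion of incomplete exponential sums, and the routine bookkeeping of the $2^{k}$ and $k$ factors for fixed $k$) are standard.
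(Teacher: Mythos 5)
First, note that the paper itself contains no proof of this proposition: it is quoted verbatim from Dartyge, Mosaki and S\'ark\"ozy \cite{DartygeMS2009} as one of the example constructions to be fed into Theorems~\ref{sn1balance}--\ref{unpattern}. So there is no in-paper argument to compare against; your sketch has to be judged against what a complete proof must contain. Your overall architecture is the right one and is indeed the approach of \cite{DartygeMS2009}: rewrite membership via the inverse $\overline{f(n)}$, expand the interval indicator in additive characters with $\sum_{b\not\equiv 0}|\widehat{\mathbf{1}_J}(b)|\ll p\log p$, complete the incomplete sums at the cost of one more $\log p$, and apply the Weil bound to $\sum_y e_p(R(y)+cy)$ with $R(x)=\sum_j b_j/f(x+d_j)$. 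The bookkeeping of the $2^k$, $k$ and $\log p$ factors and the replacement of $|\mathcal{R}|/p$ by $s/p$ are all fine.

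The genuine gap is the one you flag yourself and do not close: the non-degeneracy of $R$. You correctly observe that, all finite poles of $R$ being at most simple and $R$ vanishing at infinity, the Weil bound can only fail for $c=0$ when $R$ is constant, hence identically zero; but you must then exclude $R\equiv 0$ for \emph{every} tuple $(b_1,\ldots,b_k)\in(\mathbb{Z}_p\setminus\{0\})^k$ and every choice of shifts, and this cannot be delegated to a ``negligible contribution'' argument, since a single degenerate tuple with small $|b_j|$ would contribute $\widehat{\mathbf{1}_J}$-weights of size $\asymp p$ times a complete sum equal to $p$. Your iteration only produces a relation $\sum_t(d_{i_t}-d_{j_t})\equiv 0\pmod p$ with at most $\deg(f)$ summands, and this is \emph{not} self-contradictory: the differences $d_i-d_j$ are nonzero residues that can be as large as $p-1$ in absolute value and can cancel each other (already $(d_2-d_1)+(d_1-d_2)=0$), so no contradiction follows for $k\ge 3$ without further input. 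The hypotheses that must do the work here --- $f$ squarefree and $k<p/(2\deg f)$ --- are exactly the ones your sketch never actually uses at this point (you use $k\deg f<p/2$ only for the degree conditions in Weil's bound). This residue-cancellation lemma is the technical heart of the result in \cite{DartygeMS2009}; everything else in your write-up is standard, so the proof is incomplete precisely where the proposition is non-trivial.
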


Dartyge and S\'{a}rk\"{o}zy \cite{DartygeS2009} presented constructions by using
the argument of complex numbers, multiplicative and
additive characters.

\begin{proposition}
Let $p$ be a prime,
$\chi$ a multiplicative character, $\psi$ a non-trivial additive character
modulo $p$, $f(x),g(x)\in\mathbb{Z}_p[x]$ with $\deg(g)\geq 2$, and $\alpha,\beta$ real numbers with
$\alpha<\beta\leq \alpha+1$. Define $\mathcal{R}\subseteq \mathbb{Z}_p$ by
$$
\mathcal{R}=\{n:\ 0\leq n\leq p-1, \gcd(f(n),p)=1  \hbox{\ and \ }
2\pi \alpha\leq\arg\left(\chi(f(n))\psi(g(n))\right)<2\pi \beta\}.
$$
Then we have
$$|\mathcal{R}|=(\beta-\alpha)p+O\left((\deg(f)+\deg(g))p^{\frac{1}{2}}\log p\right)$$
and for $k\leq \deg(g)-1$
$$
C_{k}(\mathcal{R},p)\ll (1+o(1))^k(\deg(f)+\deg(g))p^{\frac{1}{2}}(\log p)^{k+1}.
$$
\end{proposition}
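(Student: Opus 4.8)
The plan is to detect the arc condition defining $\mathcal{R}$ by a smooth Fourier approximation on the circle and to reduce both assertions to mixed multiplicative–additive character sums estimated by Weil's theorem. Write $e(x)=e^{2\pi i x}$ and, for $n$ with $\gcd(f(n),p)=1$, let $\theta(n)\in[0,1)$ be defined by $\chi(f(n))\psi(g(n))=e(\theta(n))$, so that $\mathbf{1}_{\mathcal{R}}(n)=\mathbf{1}_{[\alpha,\beta)}(\theta(n)\bmod 1)$; set $\mathbf{1}_{\mathcal{R}}(n)=0$ for the at most $\deg f$ values of $n$ with $p\mid f(n)$. By Vinogradov's (Beurling--Selberg type) lemma, for every integer $J\ge 1$ there are trigonometric polynomials $\Psi^{\pm}(\theta)=\sum_{|j|\le J}c^{\pm}_{j}e(j\theta)$ with $\Psi^{-}\le\mathbf{1}_{[\alpha,\beta)}\le\Psi^{+}$, constant term $c^{\pm}_{0}=(\beta-\alpha)\pm\frac{1}{J+1}$, and $|c^{\pm}_{j}|\le\min(\beta-\alpha,\frac{1}{\pi|j|})+\frac{1}{J+1}$. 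The crucial input is that for $1\le|j|\le J<p$ the sum $\sum_{n}e(j\theta(n))=\sum_{n}\chi^{j}(f(n))\psi(jg(n))+O(\deg f)$ is, since $jg$ is nonconstant and (for $p$ large) of degree $<p$, bounded by Weil's theorem by $O((\deg f+\deg g)p^{1/2})$. Summing the inequalities $\Psi^{-}(\theta(n))\le\mathbf{1}_{\mathcal{R}}(n)\le\Psi^{+}(\theta(n))$ over all $n$ and using $\sum_{1\le|j|\le J}|j|^{-1}\ll\log J$ gives $|\mathcal{R}|=(\beta-\alpha)p+O(p/J+(\deg f+\deg g)p^{1/2}\log p)$, and $J=\lceil p^{1/2}\rceil$ yields the claimed count.

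For $C_k$ fix $1\le M\le p$ and distinct $d_1,\dots,d_k$, and put $\rho=|\mathcal{R}|/p=(\beta-\alpha)+O((\deg f+\deg g)p^{-1/2}\log p)$. We must bound $\Sigma=\sum_{n=0}^{M-1}\prod_{i=1}^{k}\big(\mathbf{1}_{\mathcal{R}}(n+d_i)-\rho\big)$. Since every factor has absolute value $\le 1$, replacing $\mathbf{1}_{\mathcal{R}}(n+d_i)$ by $\Psi^{\pm}(\theta(n+d_i))$ in one factor after another (choosing the sign so as to control the telescoping difference) and bounding the remaining factors trivially shows that $\Sigma$ agrees, up to an error $\ll k\sum_{n}(\Psi^{+}-\Psi^{-})(\theta(n))\ll k\,(p/J+(\deg f+\deg g)p^{1/2}\log p)$ (apply the first paragraph's estimate to the nonnegative polynomial $\Psi^{+}-\Psi^{-}$, whose constant term is $2/(J+1)$), with $\sum_{n=0}^{M-1}\prod_{i=1}^{k}\big(\Psi(\theta(n+d_i))-\rho\big)$. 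Expanding this product into $\prod_{i}\sum_{|j_i|\le J}\tilde c_{j_i}e(j_i\theta(n+d_i))$ — where $\tilde c_0=c^{\pm}_0-\rho$ has size $O((\deg f+\deg g)p^{-1/2}\log p)$ and $\tilde c_j=c^{\pm}_j$ otherwise, so that $\sum_{(j_1,\dots,j_k)}\prod_i|\tilde c_{j_i}|\ll(\log p)^{k}$ — we are reduced to bounding, for each tuple $(j_1,\dots,j_k)\in\{-J,\dots,J\}^{k}$, the sum
$$
\sum_{n=0}^{M-1}\ \prod_{i=1}^{k}\chi^{j_i}\big(f(n+d_i)\big)\ \psi\Big(\sum_{i=1}^{k}j_i\,g(n+d_i)\Big).
$$

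Here the hypothesis $k\le\deg g-1$ enters. If $(j_1,\dots,j_k)\not\equiv(0,\dots,0)\pmod p$, the polynomial $G(n)=\sum_{i}j_i g(n+d_i)$ is nonconstant: writing $g(x)=\sum_{e}a_e x^e$ with $a_D\ne 0$, $D=\deg g$, the coefficient of $n^{D-t}$ in $G$ equals $\binom{D}{t}a_D\sum_{i}j_i d_i^{t}$ plus contributions from the $a_e$ with $e<D$, so if $G$ were constant then in particular $\sum_i j_i d_i^{t}\equiv 0\pmod p$ for $t=0,1,\dots,k-1$ (using $D\ge k+1$), and invertibility of the Vandermonde matrix $(d_i^{t})$ in the distinct $d_i$ would force $j_i\equiv 0$ for all $i$, a contradiction; moreover $1\le\deg G\le D<p$, so $G$ is not of the form $h^p-h+c$, and for $p$ large its leading coefficient is nonzero mod $p$. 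Hence Weil's bound for mixed character sums gives, for the corresponding complete sum over $\mathbb{Z}_p$, a bound $O\big((k\deg f+\deg g)p^{1/2}\big)$ (the multiplicative part has at most $k\deg f$ distinct zeros), and passing from $[0,M-1]$ to a complete sum by the standard completion identity costs a further factor $\log p$. The tuples with all $j_i\equiv 0\pmod p$ reduce to $j_i=0$ (as $|j_i|<p$) and carry only the negligible weight $\prod_i|\tilde c_0|$. Collecting everything, $\Sigma\ll k(p/J+(\deg f+\deg g)p^{1/2}\log p)+(\log p)^{k}(k\deg f+\deg g)p^{1/2}\log p$, and $J=\lceil p^{1/2}(\log p)^{-k}\rceil$ balances the terms to give $C_k(\mathcal{R},p)\ll(1+o(1))^{k}(\deg f+\deg g)p^{1/2}(\log p)^{k+1}$, the factors $k$ and the implied constants being absorbed into $(1+o(1))^{k}$.

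The two delicate points I expect are: propagating the $O(1/J)$ smoothing error of Vinogradov's lemma through a product of $k$ factors of mixed sign — since the one-sided inequalities $\Psi^{-}\le\mathbf{1}\le\Psi^{+}$ cannot be applied termwise, one must substitute the approximation one factor at a time and estimate the $L^1$-mass of $\Psi^{+}-\Psi^{-}$ along the sequence $(\theta(n))_n$; and verifying the non-degeneracy of $G(n)=\sum_i j_i g(n+d_i)$ together with the genericity conditions needed for Weil's bound, which is precisely where $k\le\deg g-1$ is used, via the Vandermonde argument above.
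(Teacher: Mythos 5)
First, a point of orientation: the paper does not prove this proposition at all --- it is imported verbatim from Dartyge and S\'ark\"ozy \cite{DartygeS2009} as one of the ready-made families of pseudorandom subsets to be fed into the theorems of Section~\ref{sec:balance} --- so there is no in-paper argument to compare yours against. Judged on its own terms, your architecture (Vaaler--Selberg majorants for the arc, reduction to mixed sums $\sum_n\prod_i\chi^{j_i}(f(n+d_i))\,\psi\bigl(\sum_i j_i g(n+d_i)\bigr)$, Weil's bound, and a triangular/Vandermonde non-degeneracy argument) is the standard and essentially correct route; the cardinality estimate and the shape of the correlation bound both come out right.

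There is, however, one concrete gap, and it sits exactly where the hypothesis $k\le\deg g-1$ does its real work. You apply Weil to the \emph{complete} sum with additive polynomial $G(n)=\sum_i j_i g(n+d_i)$ and then say that completion from $\{0,\dots,M-1\}$ ``costs a further factor $\log p$''. But the completion identity replaces $G(n)$ by $G(n)+\lambda a n$ for \emph{every} residue $a$ (writing the completing character $e^{2\pi i an/p}=\psi(\lambda a n)$), so you need non-degeneracy of $G$ plus an arbitrary linear polynomial. Your recorded conclusion ``$1\le\deg G$'' is not enough: if $\deg G=1$, one value of $a$ makes the additive part constant, and the sum then survives only if $\prod_i f(x+d_i)^{j_i}$ is not a constant times a $d$th power --- which you have not verified and which can fail. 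The repair is already contained in your own computation: the triangular system in the moments $\sum_i j_i d_i^{t}$ shows that some coefficient among $n^{D},\dots,n^{D-k+1}$ is nonzero, hence $\deg G\ge D-k+1\ge 2$ precisely because $k\le D-1$, and then $\deg(G+\lambda an)\ge 2$ for all $a$, so Weil applies uniformly and the completion goes through. (Your remark that the hypothesis enters ``using $D\ge k+1$'' to reach $t=k-1$ is off by one --- $D\ge k$ would suffice for that step; the extra $-1$ is consumed by the linear twist from completion. You should also note that the triangular step needs $\binom{D}{t}\not\equiv 0\bmod p$, which holds because $D=\deg g<p$ for large $p$.) Two cosmetic points: the telescoping replacement of the $k$ indicator factors by $\Psi^{\pm}$ carries a factor $\|\Psi^{\pm}\|_\infty^{k-1}$, and the cheap pointwise bound on the Selberg polynomials is an absolute constant larger than $1$, so your argument as written gives $C^{k}$ rather than $(1+o(1))^{k}$; similarly the factor $k$ from the number of distinct roots of $\prod_i f(x+d_i)$ is not literally absorbed by $(1+o(1))^{k}$.
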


Remarks.
\begin{enumerate}\item This construction generalizes Dartyge's and S\'{a}rk\"{o}zy's
construction \cite{DartygeS2007_1} for $\chi$ being the Legendre symbol, $\psi$ trivial,
$\alpha=-1/4$ and $\beta=1/4$, and extends Dartyge's, Mosaki's and
S\'{a}rk\"{o}zy's construction \cite{DartygeMS2009} for $\chi$ trivial, $\psi$ canonical,
$\alpha=\frac{r}{p}$ and $\beta=\frac{r+s-1}{p}$.
\end{enumerate}

Let $p$ be a prime and let $n$ be an integer
with $\gcd(n, p)=1$. The Fermat quotient~$q_p(n)$ is defined as the
unique integer with
$$
q_p(n)\equiv \frac{n^{p-1}-1}{p} \bmod p,\quad 0\leq q_p(n)\leq
p-1.
$$
We also define $q_p(n)=0$ for $\gcd(n,p)>1$. The first author and Zhang \cite{LiuZ2019}
studied the pseudorandomness of subsets constructed by Fermat
quotients using estimates for exponential sums and character
sums with Fermat quotients.

\begin{proposition}
Let $p$ be an odd prime
number, and $d\mid p-1$. Define $\mathcal{R}\subseteq
\mathbb{Z}_{p^2}$ by
$$
\mathcal{R}=\left\{n:\ 0\leq n\leq p^2-1, \ \exists \ y \hbox{\ such
that \ } 1\leq y\leq p-1 \hbox{\ and \ } q_p(n)\equiv y^d \bmod
p\right\}.
$$
Then we have
$$|\mathcal{R}|=\frac{(p-1)^2}{d}$$
and
$$
C_{k}\left(\mathcal{R},p^2\right)\ll kp^{\frac{5}{3}}.
$$
\end{proposition}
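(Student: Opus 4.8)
The cardinality is elementary. First I would record the shift identity $q_p(m+p)\equiv q_p(m)-m^{p-2}\pmod p$, which follows at once from $(m+p)^{p-1}\equiv m^{p-1}(1-pm^{-1})\pmod{p^2}$. Consequently, for each residue $r$ coprime to $p$ the $p$ lifts $r,r+p,\dots,r+(p-1)p$ of $r$ to $\mathbb{Z}_{p^2}$ carry Fermat quotients $q_p(r),q_p(r)-r^{p-2},\dots,q_p(r)-(p-1)r^{p-2}$, which run through every residue class modulo $p$ exactly once; hence every value $t\in\mathbb{Z}_p$ is attained by $q_p$ on exactly $p-1$ of the $p(p-1)$ units modulo $p^2$. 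Since $n\in\mathcal{R}$ forces $\gcd(n,p)=1$ and amounts to $q_p(n)\bmod p$ being one of the $(p-1)/d$ nonzero $d$-th power residues modulo $p$, this gives $|\mathcal{R}|=(p-1)\cdot\frac{p-1}{d}=\frac{(p-1)^2}{d}$.

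For the correlation bound I would translate the indicator of $\mathcal{R}$ into multiplicative characters. Fix a character $\chi$ modulo $p$ of order $d$, extend all characters by $\chi(0)=0$, and write $\chi^0$ for the principal character. Then for every $n\in\mathbb{Z}_{p^2}$ one has $\mathbf 1_{\mathcal R}(n)=\tfrac1d\sum_{j=0}^{d-1}\chi^j(q_p(n))$, and since $|\mathcal R|/p^2$ equals the mean of the right-hand side (the $\chi^j$ with $1\le j\le d-1$ contribute $0$ on average),
$$f_{\mathcal R}(n)=\frac1d\sum_{j=0}^{d-1}\phi_j(n),\qquad \phi_0(n)=\chi^0(q_p(n))-\frac{(p-1)^2}{p^2},\quad \phi_j(n)=\chi^j(q_p(n))\ \ (1\le j\le d-1).$$
Multiplying out the product defining $C_k$ and using $|\phi_j|\le 1$ shows the correlation sum of order $k$ is at most $\max_{j_1,\dots,j_k}\left|\sum_{n=0}^{M-1}\prod_{i=1}^k\phi_{j_i}(n+d_i)\right|$ — the factor $d^{-k}$ from the expansion exactly cancels the $d^k$ resulting terms, so there is no loss in $d$. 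If some $j_i=0$ this sum is $O(p)$, since $|\prod_i\phi_{j_i}(n+d_i)|\le|\phi_0(n+d_i)|$ and $\sum_{n\bmod p^2}|\phi_0(n)|\ll p$ ($\phi_0$ being $O(1/p)$ outside the $O(p)$-element set where $q_p$ vanishes). Thus it remains to bound, for nontrivial $\chi^{j_1},\dots,\chi^{j_k}$,
$$W=\sum_{n=0}^{M-1}\chi^{j_1}(q_p(n+d_1))\cdots\chi^{j_k}(q_p(n+d_k)).$$

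To estimate $W$ I would use the fibration $n=a+bp$ ($0\le a\le p-1$), splitting $[0,M)$ into $b$-intervals (the leftover piece has $O(p)$ terms). Writing $d_i=\alpha_i+\beta_i p$ and iterating the shift identity gives $q_p(n+d_i)\equiv q_p(a+\alpha_i)-(b+\beta_i)\,\overline{a+\alpha_i}\pmod p$ whenever $p\nmid a+\alpha_i$ (otherwise the $i$-th factor is $0$), so for fixed $a$ the inner sum over $b$ is an incomplete multiplicative character sum $\sum_b\prod_i\chi^{j_i}(b-\rho_i(a))$ with $\rho_i(a)\equiv (a+\alpha_i)q_p(a+\alpha_i)-\beta_i\pmod p$. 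For the $a$ at which the $\rho_i(a)$ are pairwise distinct, the Weil bound together with completion of the $b$-interval gives $\ll kp^{1/2}\log p$, and summing over the $\le p$ values of $a$ yields $\ll kp^{3/2}\log p$. The remaining, degenerate, $a$ are those with $\rho_i(a)=\rho_{i'}(a)$ for some $i\ne i'$, i.e. $(a+\alpha_i)q_p(a+\alpha_i)-(a+\alpha_{i'})q_p(a+\alpha_{i'})\equiv\beta_i-\beta_{i'}\pmod p$; here one needs a nontrivial estimate for the number of such $a$ — equivalently, that $m\mapsto mq_p(m)\bmod p$ has no large fibres even after a shift — which is exactly the type of character-sum estimate for Fermat quotients established in \cite{LiuZ2019}. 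Feeding it in bounds the degenerate contribution by $\ll kp^{5/3}$, whence $W\ll kp^{5/3}$ and $C_k(\mathcal R,p^2)\ll kp^{5/3}$.

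The main obstacle is precisely this last input. The degeneracy condition $(a+\alpha)q_p(a+\alpha)-(a+\alpha')q_p(a+\alpha')\equiv c\pmod p$ can be rewritten as a polynomial congruence in $a$ of degree $p-2$, so the trivial root count is useless; one must exploit the arithmetic pseudorandomness of Fermat quotients (through exponential and character sums with $q_p$) to get a bound of the right order, and it is this step that degrades the $p^{3/2}$ coming from Weil to the final $p^{5/3}$. Alternatively, since the stated proposition is essentially the result of \cite{LiuZ2019}, one may simply invoke their correlation-measure estimate directly.
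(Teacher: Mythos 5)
The first thing to say is that the paper itself contains no proof of this proposition: it is quoted verbatim from \cite{LiuZ2019} (``The first author and Zhang \cite{LiuZ2019} studied the pseudorandomness of subsets constructed by Fermat quotients\dots''), so your closing remark --- that one may simply invoke the correlation-measure estimate of \cite{LiuZ2019} --- is in fact exactly what the paper does, and there is no in-paper argument to compare your sketch against. Your cardinality computation is correct and self-contained: the shift identity $q_p(m+p)\equiv q_p(m)-m^{p-2}\pmod p$ is right, it does show that every residue class modulo $p$ is attained by $q_p$ exactly $p-1$ times on $(\mathbb{Z}/p^2\mathbb{Z})^*$, and the exact count $|\mathcal{R}|=(p-1)^2/d$ follows. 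Your character expansion of $f_{\mathcal{R}}$ and the $O(p)$ treatment of the terms containing $\phi_0$ also check out.

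The correlation part, however, has a genuine gap precisely at the step you flag as ``the main obstacle,'' and that step is not a technicality --- it is where the exponent $5/3$ comes from. Your fibration $n=a+bp$ and the Weil bound handle the non-degenerate $a$ and would give $\ll k p^{3/2}\log p$, which is \emph{better} than the claimed bound; so the entire content of the proposition sits in the degenerate set. Since $mq_p(m)\equiv (m^p-m)/p \pmod p$, the degeneracy condition $\rho_i(a)=\rho_{i'}(a)$ amounts to $x^p-(x+\delta)^p\equiv C\pmod{p^2}$ with $x$ running over a reduced residue system, and what you need is a pointwise fibre bound of the shape $O(p^{2/3})$ for this map (each degenerate $a$ can contribute a full $O(p)$ to the inner $b$-sum when the collapsed polynomial becomes a perfect $d$-th power, so $p^{2/3}\cdot p=p^{5/3}$). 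This is Heilbronn-sum territory (Heath-Brown, Heath-Brown--Konyagin), not a routine consequence of Weil or of ``pseudorandomness of Fermat quotients'' in any generic sense, and you neither state the precise estimate you need nor indicate why it holds; you only assert that ``feeding it in'' gives $\ll kp^{5/3}$. (A smaller issue: counting coincidences pairwise over $\binom{k}{2}$ pairs would naturally produce $k^2p^{5/3}$ rather than $kp^{5/3}$.) As an independent derivation the proposal is therefore incomplete at its decisive point; as a citation it coincides with the paper.
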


\begin{proposition}\label{subsets:Fermatquotient2}
Let $p$ be an odd prime
number, and let $\mathcal{G}_p$ be the set of the primitive roots
modulo $p$. Define $\mathcal{R}\subset
\mathbb{Z}_{p^2}$
by
$$
\mathcal{R}=\left\{n:\ 0\leq n\leq p^2-1, \ q_p(n)\in
\mathcal{G}_p\right\}.
$$
Then we have
$$|\mathcal{R}|=(p-1)\varphi(p-1)$$ and
$$
C_{k}\left(\mathcal{R},p^2\right)\ll k
2^{k\omega(p-1)}p^{\frac{5}{3}}.
$$
\end{proposition}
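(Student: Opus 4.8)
The plan is to establish the two assertions separately. For the size of $\mathcal{R}$ I would rely on the standard identity $q_p(n+p)\equiv q_p(n)-n^{-1}\pmod p$ for $\gcd(n,p)=1$, which iterates to $q_p(n+tp)\equiv q_p(n)-t\,n^{-1}\pmod p$. Hence, for each residue $a\in\{1,\dots,p-1\}$ the map $n\mapsto q_p(n)$ restricted to the $p$ elements of $\mathbb{Z}_{p^2}$ with $n\equiv a\pmod p$ is a bijection onto $\mathbb{Z}_p$; since $q_p(n)=0$ for $p\mid n$ and $0\notin\mathcal{G}_p$, it follows that $\mathcal{R}$ meets each of these $p-1$ classes in exactly $|\mathcal{G}_p|=\varphi(p-1)$ elements, so $|\mathcal{R}|=(p-1)\varphi(p-1)$ exactly.

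For the correlation measure I would first rewrite $f_{\mathcal{R}}$ by means of Vinogradov's formula for the indicator $\rho$ of primitive roots modulo $p$: for $\gcd(a,p)=1$,
$$\rho(a)=\frac{\varphi(p-1)}{p-1}\sum_{d\mid p-1}\frac{\mu(d)}{\varphi(d)}\sum_{\operatorname{ord}(\chi)=d}\chi(a).$$
Separating the term $d=1$, one gets for $\gcd(n,p)=1$ that $f_{\mathcal{R}}(n)=\chi_{\mathcal{R}}(n)-|\mathcal{R}|/p^2$ equals a constant of size $O(1/p)$ (the discrepancy between $\varphi(p-1)/(p-1)$ and $|\mathcal{R}|/p^2$) plus $\frac{\varphi(p-1)}{p-1}\sum_{d\mid p-1,\,d>1}\frac{\mu(d)}{\varphi(d)}\sum_{\operatorname{ord}(\chi)=d}\chi(q_p(n))$, while $f_{\mathcal{R}}(n)=-|\mathcal{R}|/p^2$ for $p\mid n$. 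In the sum defining $C_k(\mathcal{R},p^2)$ the indices $n$ with $p\mid n+d_i$ for some $i$ number $O(kp)$ and contribute $O(kp)$; over the remaining $n$ I would expand the product $\prod_{i=1}^k f_{\mathcal{R}}(n+d_i)$, choosing from each factor either its constant part or one of the character terms, into a linear combination of sums $\sum_n\prod_{i\in A}\chi_i(q_p(n+d_i))$, where $A$ ranges over subsets of $\{1,\dots,k\}$ and the $\chi_i$ $(i\in A)$ are non-trivial characters. Since $\sum_{d\mid p-1}|\mu(d)|=2^{\omega(p-1)}$ and $\varphi(p-1)/(p-1)<1$, the total $\ell^1$-mass of the coefficients is at most $2^{k\omega(p-1)}$, and the term $A=\varnothing$ contributes only $O(p)$.

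It then remains to bound, uniformly in the non-empty $A$, the distinct shifts $d_i$ and the non-trivial $\chi_i$, the incomplete character sum $\sum_n\prod_{i\in A}\chi_i(q_p(n+d_i))$. Using $q_p(n+tp)\equiv q_p(n)-t\,n^{-1}$ once more, I would fix $n$ modulo $p$ and let $n$ run through the corresponding progression of common difference $p$; on each such progression every $q_p(n+d_i)$ becomes a linear function of the running variable with leading coefficient $-(n+d_i)^{-1}$, and these linear functions are pairwise distinct because the $d_i$ are distinct (they force either distinct leading coefficients, when $d_i\not\equiv d_j\pmod p$, or distinct constant terms, when $d_i\equiv d_j\pmod p$). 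Hence the complete inner sum is a multiplicative character sum over a product of distinct linear polynomials, which is $O(k\,p^{1/2})$ by Weil's bound; summing over the residue classes modulo $p$ and completing the incomplete range of $n$ yields a bound of the shape $k\,p^{5/3}$, namely the estimate for character sums with Fermat quotients established in \cite{LiuZ2019}. Combining with the boundary contribution gives $C_k(\mathcal{R},p^2)\ll kp+2^{k\omega(p-1)}\cdot k\,p^{5/3}\ll k\,2^{k\omega(p-1)}p^{5/3}$.

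The main obstacle is exactly this last step: proving the $p^{5/3}$-bound for the character sum with Fermat quotients over an incomplete range and uniformly in the shifts. The delicate points are the completion of the range $0\le n<M$, which does not factor through the coordinates $(n\bmod p,\lfloor n/p\rfloor)$ because of carries modulo $p^2$ (so the inner progressions must be split into $O(k)$ pieces on which all the linear relations hold), and controlling the residual dependence on the outer variable $n\bmod p$, which enters through Fermat quotients of small integers; I would handle these following the exponential-sum machinery for Fermat quotients (additive-character completion, Weil's bound for the resulting complete sums, and a Weyl-type treatment of the outer variable), as in \cite{LiuZ2019}.
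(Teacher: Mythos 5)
The paper does not actually prove this proposition: it is quoted verbatim from \cite{LiuZ2019} (``the first author and Zhang studied the pseudorandomness of subsets constructed by Fermat quotients''), so your attempt is being measured against the cited source rather than against an argument in this paper. Your proof of $|\mathcal{R}|=(p-1)\varphi(p-1)$ is complete and correct: the identity $q_p(n+p)\equiv q_p(n)-n^{-1}\pmod p$ does make $t\mapsto q_p(a+tp)$ a bijection onto $\mathbb{Z}_p$ for each $a\not\equiv 0$, and the count follows. Likewise, the reduction of $C_k(\mathcal{R},p^2)$ via Vinogradov's formula, with the $\ell^1$-mass $\le 2^{k\omega(p-1)}$ of the character coefficients and the $O(kp)$ contribution of the indices with $q_p(n+d_i)\equiv 0$, is the standard and correct frame.

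The genuine gap is the one you name yourself: the entire analytic content of the correlation bound is the estimate $\sum_n\prod_{i\in A}\chi_i(q_p(n+d_i))\ll k\,p^{5/3}$, and you import it as a black box. Worse, the Weil-based sketch you offer for it does not work as stated. After fixing $a=n\bmod p$, each $q_p(n+d_i)$ becomes a linear polynomial $L_i(t)=-b_i^{-1}t+\bigl(q_p(b_i)-g_ib_i^{-1}\bigr)$ with $b_i=(a+d_i)\bmod p$, and you argue only that the $L_i$ are pairwise \emph{distinct}. For the Weil bound what is needed is that $\prod_iL_i(t)^{m_i}$ is not a perfect power of the relevant order, i.e.\ pairwise \emph{non-proportionality} (distinct roots), or else a congruence condition on the summed exponents at each repeated root. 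When $d_i\not\equiv d_j\pmod p$ the slopes differ but the roots $\tau_i\equiv b_iq_p(b_i)-g_i\pmod p$ may still coincide, and if moreover $\chi_j=\overline{\chi_i}$ (which does occur in the Vinogradov expansion) the inner sum degenerates to size $\asymp p$. Counting the residues $a$ for which such a degenerate configuration occurs amounts to counting solutions of $\,(a+d_i)q_p(a+d_i)-(a+d_j)q_p(a+d_j)\equiv g_i-g_j\pmod p$, which requires pointwise information about $b\mapsto b\,q_p(b)$ that is not available by elementary means; this is precisely where the real proof must do work. A tell-tale sign that your sketch skips the obstruction is that, if it were valid, it would yield $C_k\ll k\,2^{k\omega(p-1)}p^{3/2}\log p$, strictly stronger than the stated $p^{5/3}$ bound (it does go through for $k=1$, but not for $k\ge 2$). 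So the proposal is a correct reconstruction of the cardinality statement and of the combinatorial reduction, but the core character-sum estimate remains unproved and the proposed route to it would fail without substantial additional input.
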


Now it is easy to plug the results of Propositions~\ref{subset-powerresidues} to \ref{subsets:Fermatquotient2} in Theorems~\ref{sn1balance} to \ref{unpattern} to get many new results on balance and uniform pattern distribution of sequences of type $(s_n)$, $(t_n)$ and $(u_n)$.

\section*{\bf \Large Acknowledgments}

The first author is supported by National Natural Science Foundation of China under Grant No. 12071368,
and the Science and Technology Program of Shaanxi Province of China
under Grant No. 2019JM-573 and 2020JM-026. The second author
was partially supported by the Austrian Science Fund FWF
Project P 30405-N32.

\bigskip\bigskip

\baselineskip=0.9\normalbaselineskip {\small

}

\end{document}